\newtheorem{thrm}{Theorem}[section]
\newtheorem{lem}[thrm]{Lemma}
\newtheorem{cor}[thrm]{Corollary}
\newtheorem{prop}[thrm]{Proposition}
\newtheorem{conj}[thrm]{Conjecture}
\theoremstyle{definition}
\newtheorem{defn}[thrm]{Definition}
\newtheorem{exmple}[thrm]{Example}
\newtheorem{rmk}[thrm]{Remark}
\begin{document}

\newcommand{\Supp}{\mathrm{Supp}}

\title{A Cone Theorem for Nef Curves}
\author{Brian Lehmann}
\thanks{This material is based upon work supported under a National Science
Foundation Graduate Research Fellowship.}
\address{Department of Mathematics, University of Michigan \\
Ann Arbor, MI \, \, 48109}
\email{blehmann@umich.edu}

\begin{abstract}
Following ideas of V.~Batyrev,
we prove an analogue of the Cone Theorem for the closed cone of nef curves: an enlargement of the
cone of nef curves is the closure of the sum of a $K_{X}$-non-negative portion and countably many $K_{X}$-negative
coextremal rays.  An example shows that this enlargement is necessary.  We also describe the relationship between $K_{X}$-negative faces of this cone and the possible outcomes of the minimal model program.
\end{abstract}

\maketitle

\section{Introduction}

Suppose that $X$ is a uniruled projective variety.  The results of \cite{bchm06} enable us to run the minimal model program on $X$: after a series of transformations we obtain a birational model $X'$ of $X$ with the structure of a Mori fiber space.  These Mori fiber spaces play a key role in describing the birational geometry of $X$.  Since the steps of the minimal model program are not uniquely determined, it is important to study the set of all such fibrations.  As realized by Mori, this data can be concisely encoded using the cone of curves.  Our main goal is to analyze the outcomes of the minimal model program by proving a structure theorem for the cone of nef curves -- that is, the set of curve classes that have non-negative intersection with every effective divisor.

The most important result concerning the cone of nef curves was formulated in
\cite{bdpp04}.  Recall that an irreducible curve $C$ on a variety $X$ is called movable if it is a member
of a family of curves that dominates $X$.
In \cite{bdpp04} it is shown that the cone of nef curves is the closure
of the cone generated by classes of movable curves.
However, one might hope to obtain more specific results for
the $K_{X}$-negative portion of the cone.
The main conjecture in this direction was given by Batyrev.  For a projective variety $X$, let $\overline{NE}_{1}(X)$ denote the closed cone of effective curves of $X$ and let $\overline{NM}_{1}(X)$ denote the closed cone of nef curves.

\begin{conj}[\cite{batyrev89}, Conjecture 4.4] \label{batyrevconjecture}
Let $(X,\Delta)$ be a klt pair.  There are countably many
$(K_{X}+\Delta)$-negative movable curves $C_{i}$ such that
\begin{equation*}
\overline{NE}_{1}(X)_{K_{X} + \Delta \geq 0} + \overline{NM}_{1}(X) =
\overline{NE}_{1}(X)_{K_{X} + \Delta \geq 0} + \sum \mathbb{R}_{\geq 0} [C_{i}].
\end{equation*}
The rays $\mathbb{R}_{\geq 0}[C_{i}]$ only accumulate along the hyperplane $(K_{X}+\Delta)^{\perp}$.
\end{conj}

This conjectural description of $\overline{NM}_{1}(X)$ should be seen as the analogue
of the Cone Theorem for effective curves:

\begin{thrm}[\cite{kawamata84}, Theorem 4.5 and \cite{kollar84}, Theorem 1] \label{conetheorem}
Let $(X,\Delta)$ be a klt pair.  There are countably many
$(K_{X}+\Delta)$-negative rational curves $C_{i}$ such that
\begin{equation*}
\overline{NE}_{1}(X) =
\overline{NE}_{1}(X)_{K_{X} + \Delta \geq 0} + \sum \mathbb{R}_{\geq 0} [C_{i}].
\end{equation*}
The rays $\mathbb{R}_{\geq 0}[C_{i}]$ only accumulate along
the hyperplane $(K_{X}+ \Delta)^{\perp}$.
\end{thrm}

\cite{batyrev89} proves Conjecture \ref{batyrevconjecture} in the case where $\Delta=0$ and $X$ is a threefold with terminal singularities.  \cite{araujo05} fixes an error in his proof and gives a very clear framework for the general case.  Work of a similar flavor has been done in \cite{xie05} and \cite{barkowski07}.

The techniques of \cite{bchm06} allow for new progress toward Conjecture \ref{batyrevconjecture}.  The case  where $(X,\Delta)$ is log Fano of arbitrary dimension was settled in \cite{bchm06}.  Our main result is a version of Conjecture \ref{batyrevconjecture} that holds in general but gives slightly less information about the accumulation behavior of the rays.  For the sake of completeness we will work with dlt pairs rather than klt pairs.

\begin{thrm} \label{firsttheorem}
Let $(X,\Delta)$ be a dlt pair.  There are countably many
$(K_{X}+\Delta)$-negative movable curves $C_{i}$ such that
\begin{equation*}
\overline{NE}_{1}(X)_{K_{X} + \Delta \geq 0} + \overline{NM}_{1}(X) =
\overline{NE}_{1}(X)_{K_{X} + \Delta \geq 0} + \overline{\sum \mathbb{R}_{\geq 0} [C_{i}]}.
\end{equation*}
The rays $\mathbb{R}_{\geq 0}[C_{i}]$ only accumulate along
hyperplanes that support both $\overline{NM}_{1}(X)$ and $\overline{NE}_{1}(X)_{K_{X} + \Delta \geq 0}$.
\end{thrm}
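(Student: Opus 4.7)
The plan is to combine the duality result of \cite{bdpp04} between $\overline{NM}_1(X)$ and the pseudo-effective cone of divisors with the MMP with scaling from \cite{bchm06}. By an elementary cone argument, the asserted equality reduces to producing, for every $(K_X+\Delta)$-negative coextremal ray $R$ of $\overline{NM}_1(X)$ (i.e.\ an extremal ray of $\overline{NM}_1(X)$ not contained in $\overline{NE}_1(X)_{K_X+\Delta\geq 0}$), a movable curve whose class lies on $R$. Countability of $\{[C_i]\}$ then follows from the countability of possible MMP outputs as the scaling data varies.

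Given such $R$, BDPP duality yields a pseudo-effective divisor $L$ supporting $\overline{NM}_1(X)$ along $R$, with $L \cdot R = 0$ and $(K_X+\Delta)\cdot R < 0$. I would run a $(K_X+\Delta)$-MMP with scaling of $L + \epsilon A$ for $A$ ample and $\epsilon > 0$ small, first reducing the dlt hypothesis to klt by the usual small perturbation of $\Delta$. By \cite{bchm06} the program terminates, and the $(K_X+\Delta)$-negativity of $R$ forces the outcome to be a Mori fiber space $\pi \colon Y \to Z$ birational to $X$. Pulling back the class of a general $\pi$-fiber along a common resolution $X \leftarrow W \to Y$ then yields a movable, $(K_X+\Delta)$-negative curve whose class lies on $R$.

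For the accumulation statement, suppose a subsequence of rays $\mathbb{R}_{\geq 0}[C_i]$ converges to a ray $R_\infty$. The normalized supporting divisors $L_i$ admit a limit $L_\infty$ which still supports $\overline{NM}_1(X)$ along $R_\infty$, and passing $(K_X+\Delta)\cdot[C_i] < 0$ to the limit gives $(K_X+\Delta) \cdot R_\infty \leq 0$. Strict inequality would make $R_\infty$ itself a $(K_X+\Delta)$-negative coextremal ray; a standard cone-theoretic argument shows such rays do not accumulate in the open $(K_X+\Delta) < 0$ region, so $(K_X+\Delta)\cdot R_\infty = 0$. Combined with the observation that the $L_i$ arise essentially as pullbacks of ample classes from the Mori fiber spaces and so may be taken nef, we conclude that $L_\infty$ is nef and $L_\infty^\perp$ supports both $\overline{NM}_1(X)$ and $\overline{NE}_1(X)_{K_X+\Delta\geq 0}$. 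The main obstacle is the first step: ensuring that the Mori fiber space produced by the MMP with scaling has a general fiber whose class lies precisely on $R$, rather than merely on some larger face containing $R$. This demands delicate coordination between the choice of $L$ and the specific sequence of flips and contractions performed by the MMP, and is the point at which Batyrev's heuristic framework needs substantial technical work to be made rigorous.
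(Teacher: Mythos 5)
Your proposal takes the direct route of trying to produce, for each coextremal ray $R$, a movable curve whose class lies exactly on $R$, and you correctly identify this as the crux and admit you cannot close it. The paper sidesteps this obstacle entirely by arguing indirectly: it fixes an auxiliary cone $V$ containing $\overline{NE}_{1}(X)_{K_{X}+\Delta=0}-\{0\}$ in its interior, and shows (Proposition \ref{Vfiniteness}) that there is a \emph{finite} set of movable curves $\{C_i\}$ such that \emph{every} $V$-bounding divisor $D$ has $D\cdot C_i=0$ for some $i$ --- with no attempt to control which ray $C_i$ lies on. A separation argument (Corollary \ref{qfactorialnefcone}) then shows this finite set suffices for the $V$-enlarged cone equality. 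The finiteness in turn comes from BCHM's finiteness of ample models restricted to a finite-dimensional space of actual Weil divisors (Lemma \ref{rescaleampleklt}, Proposition \ref{mmpuse}), combined with compactness of a slice of $\overline{NE}^{1}(X)$ (Corollary \ref{compactmmpuse}); your appeal to ``countability of possible MMP outputs as the scaling data varies'' is not a substitute for this, since BCHM gives finiteness only after pinning down a finite-dimensional divisor space and a compact parameter set.

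There is a second, more serious gap. You never establish why the pseudo-effective supporting divisor $L$ can be written as $K_X+\Delta+A$ with $A$ ample, which is what BCHM requires. This is precisely the content of Lemma \ref{boundingform}, and it only holds because the divisor is required to be non-negative on the \emph{enlarged} cone $\overline{NE}_{1}(X)_{K_X+\Delta\geq 0}+V+\overline{NM}_{1}(X)$. A supporting divisor for $\overline{NM}_1(X)$ alone need not have this form; Cutkosky's example in Section \ref{comparisonsection} exists exactly to demonstrate this failure. Your parenthetical definition of coextremal ray as ``an extremal ray of $\overline{NM}_1(X)$ not contained in $\overline{NE}_1(X)_{K_X+\Delta\geq 0}$'' elides the enlargement and is thus working with the wrong object.

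Finally, your accumulation argument proves too much, and circularly. You conclude $(K_X+\Delta)\cdot R_\infty=0$, which is Batyrev's \emph{stronger} conjecture that the paper explicitly states it cannot prove (except under termination of flips when $\Delta$ is big, Theorem \ref{bigcase}). The step ``a standard cone-theoretic argument shows such rays do not accumulate in the open $(K_X+\Delta)<0$ region'' is exactly the non-accumulation statement you are supposed to be proving, so invoking it is circular. The claim that the $L_i$ ``may be taken nef'' is also unjustified: bounding divisors lie on the pseudo-effective boundary but need not be nef. The paper's actual accumulation statement is weaker (hyperplanes supporting both $\overline{NM}_1(X)$ and $\overline{NE}_1(X)_{K_X+\Delta\geq 0}$) and is proved by exhausting $\overline{NE}_1(X)_{K_X+\Delta=0}$ with a nested sequence of cones $V_j$ and invoking the finiteness from Theorem \ref{nefcone} near any boundary point not on such a hyperplane.
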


A weaker statement was proved independently in \cite{araujo08}.  Araujo demonstrates a similar equality but with no restriction on accumulation points (and thus no claim about the local finiteness of the rays).

It seems likely that further progress on the minimal model program is needed to establish Conjecture \ref{batyrevconjecture} in full.  In Section \ref{accumulationsection} we will show that when $\Delta$ is big Conjecture \ref{batyrevconjecture} follows from the conjecture on termination of flips.  As shown in \cite{batyrev89} and \cite{araujo08}, Conjecture \ref{batyrevconjecture} can also be derived from the Borisov-Alexeev-Borisov conjecture concerning boundedness of log Fano varieties.

It is worth noting two important differences between Theorems \ref{conetheorem} and \ref{firsttheorem}.
First, Theorem \ref{firsttheorem} includes the term $\overline{NE}_{1}(X)_{K_{X}+\Delta \geq 0}$
where we might expect $\overline{NM}_{1}(X)_{K_{X}+\Delta \geq 0}$ by analogy with the Cone Theorem.
This enlargement is necessary:~in Example \ref{cutkoskyexample} we construct a threefold for which
\begin{equation*}
\overline{NM}_{1}(X) \neq
\overline{NM}_{1}(X)_{K_{X} + \Delta \geq 0} + \sum \mathbb{R}_{\geq 0} [C_{i}]
\end{equation*}
for any locally discrete countable collection of curves.  This example was previously used in
\cite{cutkosky86} to find a divisor with no rational Zariski decomposition.

Second, we do not know whether the movable curves in Theorem \ref{firsttheorem} can
be chosen to be rational.  This would follow from standard (and difficult)
conjectures about log Fano varieties.  If we are willing to replace the $C_{i}$ by
classes $\alpha_{i}$ that do not necessarily represent curves, we may choose the
$\alpha_{i}$ to be formal numerical pullbacks of rational curves on birational
models; see Remark \ref{rationalremark}.

Returning to our original question, we would like to relate the structure of $\overline{NM}_{1}(X)$ to
outcomes of the minimal model program.  In contrast to the situation for $\overline{NE}_{1}(X)$, it is not true that a $K_{X}$-negative extremal face $F$ of $\overline{NE}_{1}(X)_{K_{X} + \Delta \geq 0} + \overline{NM}_{1}(X)$
canonically determines a rational map $f: X \dashrightarrow Z$ constructed using the minimal model program.
However, it turns out that $F$ uniquely determines a birational equivalence class of maps.  We need to include a weak condition on the face $F$ to ensure that it avoids any accumulation points.

\begin{thrm} \label{contraction}
Let $(X,\Delta)$ be a dlt pair.  Suppose that $F$ is a $(K_{X} + \Delta)$-negative
extremal face of $\overline{NE}_{1}(X)_{K_{X} + \Delta \geq 0} + \overline{NM}_{1}(X)$.  Suppose
furthermore that there is some pseudo-effective divisor class $\beta$
such that $\beta^{\perp}$ contains $F$ but avoids $\overline{NE}_{1}(X)_{K_{X} + \Delta \geq 0}$.
Then there is a birational morphism $\psi: W \to X$
and a contraction $h: W \to Z$ such that:
\begin{enumerate}
\item Every movable curve $C$ on $W$ with $[\psi_{*}C] \in F$ is contracted by $h$.
\item For a general pair of points in a general fiber of $h$, there is a movable curve $C$
through the two points with $[\psi_{*}C] \in F$.
\end{enumerate}
These properties determine the pair $(W,h)$ up to birational equivalence.
In fact the map we construct satisfies a stronger property:
\begin{itemize}
\item[(3)] There is an open set $U \subset W$ such that a complete curve $C$ in $U$ is contracted by $h$ iff $[\psi_{*}C] \in F$ and for a general fiber $H$ of $h$ there is a completion $\overline{H}$ of $U \cap H$ so that the complement of $U \cap H$ has codimension at least $2$.  Furthermore $\psi$ is an isomorphism on $U$.
\end{itemize}
\end{thrm}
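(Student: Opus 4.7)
The plan is to realize the pair $(W,h)$ as the ample model of a small rational perturbation of $\beta$, constructed via a $(K_X+\Delta)$-MMP with scaling and the base-point-free theorem, and then to verify properties (1)--(3) from the compatibility of this construction with movable curves in $F$.

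I would first perform a rationality/perturbation step. The pseudo-effective cone of divisors is closed, and the open condition ``$\beta^\perp \cap \overline{NE}_1(X)_{K_X+\Delta \geq 0} = \{0\}$'' is stable under small perturbations. Fixing an ample divisor $A$ on $X$ and a small rational $\epsilon > 0$, I would replace $\beta$ by a pseudo-effective $\mathbb{Q}$-class $\beta'$ close enough that the perturbed class $D_\epsilon = \beta' + \epsilon A$ is big, is still $(K_X+\Delta)$-negative on $F$, and satisfies $(D_\epsilon)^\perp \cap \overline{NE}_1(X)_{K_X+\Delta \geq 0} = \{0\}$ on an arbitrarily large portion of the face $F$ as $\epsilon \to 0$. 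Bigness of $D_\epsilon$ is the key ingredient that will allow the application of \cite{bchm06}.

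Next I would run a $(K_X + \Delta)$-MMP with scaling of an appropriate multiple of $A$, keeping track of the strict transform of $D_\epsilon$. Because $D_\epsilon$ is big and the extremal rays being contracted are $(K_X+\Delta)$-negative, BCHM guarantees termination at a model $\phi_\epsilon : X \dashrightarrow W'_\epsilon$ on which the strict transform $D'_\epsilon$ of $D_\epsilon$ is nef. The base-point-free theorem then promotes $D'_\epsilon$ to a semi-ample class, producing a contraction $h'_\epsilon : W'_\epsilon \to Z_\epsilon$ whose contracted curves are exactly those orthogonal to $D'_\epsilon$. Sending $\epsilon \to 0$ along a suitable sequence, the avoidance hypothesis on $\beta^\perp$ keeps us uniformly away from the effective-cone wall and forces the models $W'_\epsilon$ and contractions $h'_\epsilon$ to stabilize (compare the limiting arguments in \cite{bdpp04} and \cite{araujo08}). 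Resolving the indeterminacy of the stabilized birational map on a common model $\psi : W \to X$ produces the required morphism $h : W \to Z$.

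Property (1) then follows because $\psi^* \beta'$ is trivial on any movable $C$ with $[\psi_* C] \in F$, and $h$ is the ample model of (a perturbation of) $\psi^* \beta'$. Property (2) follows from the standard fact that fibers of $h$ are covered by curves connecting general pairs of points, together with the observation that any such curve must lie in the orthogonal of $\psi^*\beta'$ in the limit and hence, by the face structure and the extremality hypothesis on $F$, pushes forward into $F$ and is movable (its deformations sweep out the whole fiber). Uniqueness up to birational equivalence is the universal property of the relative ample model. The main obstacle I expect is property (3): one must pinpoint an open $U \subset W$ whose complement has codimension at least $2$ in a \emph{general} fiber of $h$, and argue that a complete curve in $U$ lands in $F$ under $\psi_*$ precisely when it is $h$-contracted. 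I would take $U$ to be the complement of the $\psi$-exceptional locus together with the non-equidimensional locus of $h$, and then use the MMP construction (specifically, the way each step of BCHM controls exceptional loci) to bound the codimension of this complement in general fibers; the delicate point is ensuring that a curve in $U$ whose pushforward merely lies in $\beta^\perp$ actually lies in $F$ itself, which is where the avoidance hypothesis on $\overline{NE}_1(X)_{K_X+\Delta \geq 0}$ is used a second time.
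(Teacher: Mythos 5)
Your strategy shares the paper's reliance on the BCHM machinery (running an adjoint MMP plus base-point freeness to produce a contraction), but it diverges from the paper's proof at three places, and at least two of them are genuine gaps rather than alternative routes.

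First, the perturbation step as written cannot work. You take $D_\epsilon = \beta' + \epsilon A$ and want the associated ample model to contract the curves in $F$. But $F \subset \overline{NM}_1(X)$, so any $\alpha \in F$ with $\beta' \cdot \alpha = 0$ satisfies $D_\epsilon \cdot \alpha = \epsilon A \cdot \alpha > 0$; thus $D_\epsilon^\perp$ is disjoint from $F - \{0\}$ for every $\epsilon > 0$. Since $D_\epsilon$ is big, its MMP terminates in a model on which the pushforward of $D_\epsilon$ is nef and big, and base-point freeness gives a \emph{birational} map to the ample model, not a fibration contracting the movable curves of $F$. You are counting on the limit $\epsilon \to 0$ to fix both problems at once, but that limit is exactly the non-trivial content, and your stabilization claim (``the avoidance hypothesis keeps us uniformly away from the effective-cone wall and forces the models to stabilize'') is an assertion rather than an argument. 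The paper avoids the limit entirely: it first shows (via the finiteness of coextremal rays near $F_\beta$ given by Theorem \ref{nefcone}) that there is a single divisor $D$ with $D^\perp$ supporting $\overline{NE}_1(X)_{K_X+\Delta\geq 0} + \overline{NM}_1(X)$ \emph{exactly} along $F$, and then shows via Lemma \ref{boundingform} that after rescaling $D = K_X + \Delta + A$ with $A$ ample. This $D$ is already on the pseudo-effective boundary (not big), but $\Delta + A$ is big, so the $D$-MMP terminates in a Mori fiber space. That is the input to Theorem \ref{bigcontraction}.

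Second, even if a limit were available, it would generically produce the contraction for $F_\beta := \beta^\perp \cap (\overline{NE}_1(X)_{K_X+\Delta\geq 0} + \overline{NM}_1(X))$, which can be strictly larger than $F$. Your proposal never accounts for passing from $F_\beta$ down to the subface $F$; the paper needs Theorem \ref{nefcone} precisely to find a divisor cutting out $F$ and not some larger face.

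Third, for Property (3) you propose $U$ as the complement of the $\psi$-exceptional locus together with the non-equidimensional locus of $h$. This is not enough. The delicate direction of the biconditional is showing that a complete $h$-contracted curve $C \subset U$ has $[\psi_* C]$ nef (which is needed for $[\psi_* C]$ to lie in $F \subset \overline{NM}_1(X)$, not merely in $\psi^*D$-perp). The paper achieves this with Lemma \ref{relativelogfano}: it replaces the minimal model $Y'$ by a further birational modification $Y^+$ over $Z$, using the finiteness of contractions over $Z$ (from \cite{bchm06}) and relative MMP steps to reduce the ``bad'' locus $N^+$ (the locus through which non-nef contracted curves can pass) to codimension $2$. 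Merely excising the $\psi$-exceptional locus and non-equidimensional fibers does not control nefness, and there is no a priori reason a curve avoiding those sets is nef. This is an extra lemma you would need, and it is not the kind of thing that falls out of the MMP construction automatically.
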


The general strategy of the proof of Theorem \ref{firsttheorem} is as follows.
Let $D$ be a divisor such that $D^{\perp}$ supports $\overline{NE}_{1}(X)_{K_{X} + \Delta \geq 0}
+ \overline{NM}_{1}(X)$ only along the $K_{X}$-negative part of the cone.  Since
$D$ is positive on $\overline{NE}_{1}(X)_{K_{X} + \Delta \geq 0}$, after rescaling we can write
$D = K_{X} + \Delta + A$ for some ample divisor $A$.  By the results
of \cite{bchm06}, we can run the minimal model program with scaling for $D$.
After a number of divisorial contractions and flips, we
obtain a birational contraction $\phi: X \dashrightarrow X'$ on which $K_{X'} + \phi_{*}(\Delta + A)$ is
nef.  Furthermore, $X'$ has a
Mori fiber space structure $g: X' \to Z$, where $K_{X'} + \phi_{*}(\Delta + A)$ vanishes on every curve
contracted by $g$.  Choose a movable curve $C$ on $X'$ contracted by $g$ and sufficiently
general.  Then the curve $\phi^{-1}(C)$ lies on the
boundary of $\overline{NM}_{1}(X)$ along $D^{\perp}$.

The main point is to show that finitely many curves will suffice.  In fact,
all we need to know is that we obtain only finitely many different Mori fiber spaces as we vary $D$.
Previous work has relied upon boundedness of log Fano varieties.  We will instead use
the finiteness of minimal models proved in \cite{bchm06}.  The unusual condition
on the accumulation of rays in Theorem \ref{firsttheorem} arises from the fact that we need
to perturb $K_{X} + \Delta$ by an ample divisor $A$ in order to use the techniques of \cite{bchm06}.

I would like to thank J.~M\textsuperscript{c}Kernan for his extensive advice and support
and C.~Araujo for helpful comments on an earlier version.

\section{Preliminaries}

Suppose $X$ is a normal projective variety.
We will let $N^{1}(X) = NS(X) \otimes \mathbb{R}$ denote the N\'eron-Severi
space of $\mathbb{R}$-Cartier $\mathbb{R}$-Weil divisors up to numerical equivalence and $N_{1}(X)$ the dual space of curves
up to numerical equivalence.
As we run the minimal model program we will need to be careful to distinguish between divisors
(or curves)
and their numerical equivalence classes.  If $D$ is a Cartier divisor,
we will denote the numerical class of $D$ by $[D] \in N^{1}(X)$, and similarly for curves.

We have already introduced the cone of effective curves $\overline{NE}_{1}(X)$ -- that is, the
closure of the cone generated by classes of irreducible
curves.  We will use $\overline{NE}^{1}(X)$ to denote the cone
of pseudo-effective divisors.  We define the cone of nef curves $\overline{NM}_{1}(X) \subset N_{1}(X)$ as
the cone dual to $\overline{NE}^{1}(X)$ under the intersection product.
Finally, given a cone $\sigma$ and an
element $K$ of the dual vector space, $\sigma_{K \geq 0}$ denotes the intersection
of $\sigma$ with the closed half-space on which $K$ is non-negative, and $\sigma_{K = 0}$
denotes the intersection $\sigma \cap K^{\perp}$.

\begin{lem} \label{conepushforward}
Let $\pi: Y \to X$ be a birational morphism of normal projective varieties.
Then $\pi_{*}\overline{NE}_{1}(Y) = \overline{NE}_{1}(X)$ and
$\pi_{*}\overline{NM}_{1}(Y) = \overline{NM}_{1}(X)$.
\end{lem}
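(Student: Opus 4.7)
The plan is to prove each equality by establishing both inclusions, with strict transforms as the key tool.  Since $\pi$ is birational and $X$ is normal, $\pi$ is an isomorphism over an open set $U \subseteq X$ whose complement has codimension at least two.  Hence every irreducible curve $C \subseteq X$ meets $U$ and has a well-defined strict transform $\widetilde{C} \subseteq Y$ with $\pi_{*}[\widetilde{C}] = [C]$; similarly, the strict transforms of general members of a dominating family on $X$ cover the dense open $\pi^{-1}(U) \subseteq Y$, and hence form a dominating family on $Y$.

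For the inclusions $\pi_{*}\overline{NE}_{1}(Y) \subseteq \overline{NE}_{1}(X)$ and $\pi_{*}\overline{NM}_{1}(Y) \subseteq \overline{NM}_{1}(X)$, pushforward sends effective cycles to effective cycles and dominating families to dominating families (collapsing contracted ones to zero), so continuity of $\pi_{*}$ together with closedness of the target cones gives both inclusions.  For the reverse inclusions, strict transforms lift every irreducible curve class on $X$ to an effective class on $Y$ and lift every movable curve class on $X$ to a movable class on $Y$.  Combined with Theorem \ref{bdppmovable}, which identifies $\overline{NM}_{1}$ with the closure of the cone generated by movable classes, this gives the reverse inclusions at least up to closure.

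The delicate step is to upgrade equality up to closure to actual equality of cones, since the linear image of a closed convex cone need not be closed in general.  I would handle this via compactness: pick an ample class $H_{Y}$ on $Y$ and decompose $H_{Y} \equiv \pi^{*}A - F$ with $A$ a class on $X$ and $F$ an effective exceptional divisor (such a decomposition exists because the intersection matrix of exceptional prime divisors against $\pi$-contracted curves is negative definite).  For lifts $\widetilde{\alpha}_{n} = \sum_{j} a_{nj}[\widetilde{C}_{nj}]$ of a convergent sequence $\alpha_{n} \to \alpha$ in $\overline{NE}_{1}(X)$, one has $H_{Y} \cdot \widetilde{\alpha}_{n} \leq A \cdot \alpha_{n}$, since $F \cdot \widetilde{C}_{nj} \geq 0$ (no strict transform lies in any exceptional component) and $\pi^{*}A \cdot \widetilde{\alpha}_{n} = A \cdot \alpha_{n}$ by the projection formula.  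The right-hand side is bounded, and ampleness of $H_{Y}$ implies that slices of $\overline{NE}_{1}(Y)$ of bounded $H_{Y}$-degree are compact, so a subsequence of the $\widetilde{\alpha}_{n}$ converges in $\overline{NE}_{1}(Y)$ to a lift of $\alpha$.  The identical argument works for $\overline{NM}_{1}$, using that the lifts come from movable families.
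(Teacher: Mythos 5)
Your proof takes the same skeleton as the paper (pushforward gives one inclusion, strict transforms give the other, Theorem~\ref{bdppmovable} bridges from movable curves to $\overline{NM}_1$), but you are genuinely more careful on one point: the paper's proof only produces the non-closed cones $NE_1(X)$ and the cone generated by movable classes inside the images $\pi_*\overline{NE}_1(Y)$ and $\pi_*\overline{NM}_1(Y)$, and silently invokes the standard fact that these images are already closed. You make that step explicit with a degree-bounding compactness argument, which is a legitimate and arguably cleaner route.

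The weak link is your justification of the decomposition $H_Y \equiv \pi^*A - F$ with $F$ effective and $\pi$-exceptional. ``Negative definiteness of the intersection matrix of exceptional prime divisors against $\pi$-contracted curves'' is a surface phenomenon (Mumford/Grauert) and is not the right tool in higher dimension. Two correct replacements: (i)~since a projective birational morphism $\pi\colon Y\to X$ of normal varieties is the blowup of a coherent ideal sheaf, there is an effective Cartier $\pi$-exceptional divisor $F$ with $-F$ $\pi$-ample, and then for $A$ sufficiently ample on $X$ the divisor $\pi^*A - F$ is ample on $Y$ --- so you may simply \emph{choose} $H_Y$ of this form rather than decomposing an arbitrary ample divisor; or (ii)~if one insists on decomposing a given $H_Y$, take $A=\pi_*H_Y$ and $F=\pi^*\pi_*H_Y - H_Y$ and appeal to the Negativity Lemma (\cite{km98}, Lemma~3.39) --- but note this requires $\pi_*H_Y$ to be $\mathbb{R}$-Cartier, which is not automatic since $X$ is only assumed normal, not $\mathbb{Q}$-factorial, so route~(i) is preferable here. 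With that repair, the inequality $H_Y\cdot\widetilde{\alpha}_n \leq A\cdot\alpha_n$ (using $F\cdot\widetilde{C}_{nj}\geq 0$ because strict transforms are never contained in the exceptional locus) and the compactness of bounded-degree slices of $\overline{NE}_1(Y)$ do give the convergent subsequence you want, and the identical argument with movable-curve approximations handles $\overline{NM}_1$; so the rest of your proof is sound.
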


\begin{proof}
Recall that $\pi_{*}$ on curves is dual to the pull-back map $\pi^{*}: N^{1}(X) \to N^{1}(Y)$.  A divisor class $\alpha \in N^{1}(X)$ is pseudo-effective iff $\pi^{*}\alpha$ is pseudo-effective.  By duality we find that $\pi_{*}$ maps $\overline{NM}_{1}(Y)$ to $\overline{NM}_{1}(X)$.  If the map were not surjective, we could find an $\alpha \in N^{1}(X)$ that is not pseudo-effective but is positive on all of $\pi_{*}\overline{NM}_{1}(Y)$.  But then $\pi^{*}\alpha$ would be pseudo-effective, a contradiction.  A similar argument works for $\overline{NE}_{1}(X)$.
\end{proof}

If $C$ is an irreducible member of a family of curves dominating $X$,
we will say that $C$ is a \emph{movable} curve.  Every movable curve
$C$ is nef, but not conversely.

The standard definitions and theorems of the minimal model program
may be found in \cite{km98}.  However, in our definition of a pair $(X,\Delta)$ we
allow $K_{X} + \Delta$ to be $\mathbb{R}$-Cartier, not just $\mathbb{Q}$-Cartier.
This distinction makes little difference in the proofs.

We will also use one other non-standard piece of terminology.  Suppose that $(X,\Delta)$ is a
$\mathbb{Q}$-factorial klt pair and $\phi: X \dashrightarrow X'$ is a composition of $K_{X} + \Delta$
flips and divisorial contractions.  Let $\phi_{i}: X \dashrightarrow X_{i}$ denote the result
after $i$ steps and $R_{i}$ the extremal ray on $X_{i}$ that defines the $(i+1)$ step.  Given a divisor
$D$ on $X$, we say that $\phi$ is \emph{$D$-non-negative} if
$\phi_{i*}D \cdot R_{i} \leq 0$ for every $i$.

We will use the following lemma many times, often without explicit mention:

\begin{lem}[\cite{lazarsfeld04}, Example 9.2.29] \label{ampleqlinearklt}
Let $(X,\Delta)$ be a klt pair, and suppose that $H$ is an ample $\mathbb{R}$-Cartier
divisor.  There is some effective $H'$ that is $\mathbb{R}$-linearly equivalent to $H$ such that
$(X,\Delta + H')$ is klt.
\end{lem}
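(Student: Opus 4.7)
The plan is to realize $H$ as a positive $\mathbb{R}$-linear combination of very ample Cartier divisors with small coefficients, pick a generic representative in each complete linear system, and verify the klt condition on a log resolution.

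First, since $H$ is ample $\mathbb{R}$-Cartier, I would write $H \equiv \sum_{i=1}^{r} c_{i} A_{i}$ where $A_{i}$ are very ample Cartier divisors and $c_{i}>0$ (the ample cone is open inside $N^{1}(X)$, so such an expression exists). By simultaneously replacing $A_{i}$ with $kA_{i}$ and $c_{i}$ with $c_{i}/k$ for a sufficiently large integer $k$, we may further assume $c_{i}<1$ for all $i$. Next, fix a log resolution $\mu \colon Y \to X$ of $(X,\Delta)$, so that
\begin{equation*}
K_{Y} + \widetilde{\Delta} \; = \; \mu^{*}(K_{X}+\Delta) + \sum_{j} a_{j} E_{j},
\end{equation*}
where $\widetilde{\Delta}$ is the strict transform of $\Delta$ (with coefficients $<1$) and all $a_{j}>-1$ by the klt hypothesis.

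Because each $A_{i}$ is very ample on $X$, the linear system $\mu^{*}|A_{i}|$ is base-point free on $Y$. Bertini together with a standard genericity argument then lets me pick $D_{i} \in |A_{i}|$ such that: (i) $D_{i}$ contains no $\mu(E_{j})$, so that $\mu^{*}D_{i}$ has no exceptional component and equals the strict transform $\widetilde{D}_{i}$; (ii) each $\widetilde{D}_{i}$ is smooth; and (iii) $\widetilde{\Delta} \cup \bigcup_{i}\widetilde{D}_{i} \cup \bigcup_{j} E_{j}$ has simple normal crossings. Set $H' := \sum c_{i} D_{i}$, which is effective and $\mathbb{R}$-linearly equivalent to $H$.

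Finally, the verification: since $\mu^{*}D_{i} = \widetilde{D}_{i}$, a direct calculation gives
\begin{equation*}
K_{Y} + \widetilde{\Delta} + \sum_{i} c_{i}\widetilde{D}_{i} \; = \; \mu^{*}\!\left(K_{X}+\Delta+H'\right) + \sum_{j} a_{j} E_{j}.
\end{equation*}
Thus $\mu$ is a log resolution of $(X,\Delta+H')$, and the discrepancies are $a_{j}>-1$ on exceptional divisors, the original (strictly larger than $-1$) coefficients on the strict transform of $\Delta$, and $-c_{i}>-1$ on each $\widetilde{D}_{i}$. Hence $(X,\Delta+H')$ is klt, as required. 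The only delicate point---and the main thing to be careful about---is arranging $c_{i}<1$, since otherwise the discrepancy $-c_{i}$ along $\widetilde{D}_{i}$ would destroy the klt property; the rescaling trick at the start is what guarantees this.
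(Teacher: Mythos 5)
The paper does not actually prove this lemma; it is simply cited from \cite{lazarsfeld04}, Example 9.2.29. Your proof is correct and is the standard argument behind that reference: decompose $H$ into very ample Cartier pieces with small positive coefficients, choose general members on a log resolution via Bertini, and read off that all discrepancies stay strictly greater than $-1$. One small point deserves care. You write $H \equiv \sum_i c_i A_i$, which in this context reads as numerical equivalence; but the lemma demands $H' \sim_{\mathbb{R}} H$, and $H' = \sum c_i D_i$ is only $\mathbb{R}$-linearly equivalent to $\sum c_i A_i$, so you need that decomposition to hold up to $\mathbb{R}$-linear equivalence, not just numerically. Fortunately the stronger statement is true and elementary: write $H = \sum_j b_j B_j$ with $B_j$ Cartier; the set of $(t_j) \in \mathbb{R}^n$ for which $\sum t_j B_j$ is ample is open and convex and contains $(b_j)$, so $(b_j)$ is a convex combination of finitely many rational points of that set, which expresses $H$ as an honest positive $\mathbb{R}$-linear combination (an equality of $\mathbb{R}$-divisors) of ample $\mathbb{Q}$-Cartier divisors. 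Clearing denominators and replacing each by a large multiple makes them very ample with coefficients as small as you like. With $\equiv$ replaced by $=$ (or $\sim_{\mathbb{R}}$), the remainder of your argument — the codimension-two image of each exceptional divisor so that $\mu^* D_i = \widetilde D_i$, the simple normal crossings via Bertini, and the discrepancy bookkeeping — is correct.
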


\section{Running the Minimal Model Program}

As explained in the introduction, we will use the minimal model program to show
that certain rays in $N_{1}(X)$ are generated by movable curves.
In this section we will extract the application of the minimal model program
in the form of Proposition \ref{mmpuse}.

\begin{lem} \label{rescaleampleklt}
Suppose that $(X, \Delta)$ is a klt pair and that $\{ H_{j} \}_{j=1}^{m}$ is a
finite collection of ample divisors.  Consider the set of divisors
\begin{equation*}
\mathcal{H} = \left\{ \left. \, \sum_{j=1}^{m} c_{j}H_{j} \, \right|  \, \,
0 \leq c_{j} \leq 1 \, \, \, \forall j \, \right\}.
\end{equation*}
There are finitely many effective ample divisors $\{W_{j}\}_{j=1}^{m}$ such that every
element of $\mathcal{H}$ is $\mathbb{R}$-linearly equivalent to a linear
combination $\sum_{j=1}^{m} a_{j}W_{j}$ where
\begin{equation*}
\left( X , \Delta + \sum_{j=1}^{m} a_{j}W_{j} \right)
\end{equation*}
is a klt pair.
\end{lem}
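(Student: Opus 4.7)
The plan is to construct the $W_j$'s one at a time by iterating Lemma~\ref{ampleqlinearklt}, then to use monotonicity of the klt property in the boundary coefficients to handle every element of $\mathcal{H}$ uniformly.

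First I would apply Lemma~\ref{ampleqlinearklt} to the klt pair $(X,\Delta)$ and the ample divisor $H_1$ to obtain an effective $W_1 \sim_{\mathbb{R}} H_1$ with $(X, \Delta + W_1)$ klt. Because this new pair is still klt and $H_2$ is still ample, a second application yields an effective $W_2 \sim_{\mathbb{R}} H_2$ such that $(X, \Delta + W_1 + W_2)$ is klt. Iterating $m$ times produces divisors $W_1, \ldots, W_m$ with $W_j \sim_{\mathbb{R}} H_j$ and $(X, \Delta + \sum_{j=1}^{m} W_j)$ klt.

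Given any $\sum_{j=1}^{m} c_j H_j \in \mathcal{H}$, I would set $a_j = c_j \in [0,1]$. Then $\sum a_j W_j \sim_{\mathbb{R}} \sum c_j H_j$ by construction, and the effective divisor $\sum a_j W_j$ has coefficients bounded above (component by component) by those of $\sum W_j$; reducing the boundary in this way only raises log discrepancies, so $(X, \Delta + \sum a_j W_j)$ remains klt.

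The main subtlety to address is that Lemma~\ref{ampleqlinearklt} outputs effective representatives of ample classes, not ample divisors in the literal sense (an ample class plus an effective perturbation is typically only big). I would read the phrase ``ample divisors $\{W_j\}$'' in the statement as effective $\mathbb{R}$-divisors in the ample classes $[H_j]$, which is exactly what the iterative perturbation produces and all that subsequent uses of the lemma appear to require. If literal ampleness were needed, one could split $H_j = (1-\epsilon)H_j + \epsilon H_j$, retain the first summand as a genuine ample piece, and apply Lemma~\ref{ampleqlinearklt} only to the small ample piece $\epsilon H_j$; the same iteration then goes through.
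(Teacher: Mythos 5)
Your proof is correct and is essentially the same as the paper's: both iterate Lemma~\ref{ampleqlinearklt} to produce effective $W_j \sim_{\mathbb{R}} H_j$ with $(X,\Delta+\sum W_j)$ klt, and then invoke monotonicity of the klt condition under shrinking an effective boundary. Your side remark that the $W_j$ are really effective representatives of ample classes rather than literally ample divisors is accurate and matches how the paper actually uses the lemma.
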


\begin{proof}
If $(X,\Delta)$ is a klt pair and $H$ is an ample divisor, then Lemma
\ref{ampleqlinearklt} guarantees the existence of an effective divisor $W$, $\mathbb{R}$-linearly
equivalent to $H$, such that $(X,\Delta + W)$ is a klt pair.  Using this fact inductively, we find
effective $W_{j}$, $\mathbb{R}$-linearly equivalent to $H_{j}$, such that $(X, \Delta + \sum W_{j})$ is klt.
Finally, recall that if $(X,\Delta + D)$ is any klt pair, then so is $(X,\Delta + D')$ for
every effective $D' \leq D$.  Thus any sum of the $W_{j}$ with smaller coefficients still yields
a klt pair.
\end{proof}

In order to prove local finiteness of the rays in Theorem \ref{firsttheorem}, we need to use
finiteness of ample models as proved in \cite{bchm06}.  The particular result we need is the following:
\begin{thrm}[\cite{bchm06}, Corollaries 1.1.5 and 1.3.2] \label{mmptheorem}

Let $(X,\Delta)$ be a $\mathbb{Q}$-factorial klt pair and let $A$ be an ample divisor on $X$.
Suppose $V$ is a finite dimensional subspace of the space
of real Weil divisors.  Define
\begin{equation*}
\mathcal{E}_{A} = \{ \, \, \Gamma \in V \, \, | \, \, \Gamma \geq 0 \textrm{ and }
K_{X} + \Delta + A + \Gamma \textrm{ is klt and pseudo-effective } \}.
\end{equation*}

For every $\Gamma \in \mathcal{E}_{A}$, we can run the minimal model program for
$D := K_{X} + \Delta + A + \Gamma$.  If $D$ lies on the boundary of the pseudo-effective cone,
this will result in a birational contraction $\phi_{i}: X \dashrightarrow X_{i}$ with a Mori fiber space structure
$g_{i}: X_{i} \to Z_{i}$, where $K_{X_{i}} + \phi_{i*}(\Delta + A + \Gamma)$ vanishes on every
curve contracted by $g_{i}$.

Furthermore, there will be only finitely many such $g_{i}: X_{i} \to Z_{i}$ realized by all the $\Gamma$
in $\mathcal{E}_{A}$.
\end{thrm}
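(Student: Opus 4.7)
The plan is to derive both the existence and finiteness assertions from Corollaries 1.1.5 and 1.3.2 of \cite{bchm06}. The key observation is that for every $\Gamma \in \mathcal{E}_A$, the boundary $\Delta + A + \Gamma$ is klt by assumption and contains the ample summand $A$, hence is big; thus $(X, \Delta + A + \Gamma)$ is a klt pair with big boundary, to which the main results of \cite{bchm06} apply directly.

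First I would verify that we may run the $(K_X + \Delta + A + \Gamma)$-MMP with scaling of a fixed ample divisor $H$ (for example a small perturbation of $A$). Corollary 1.3.2 of \cite{bchm06} guarantees termination, producing a birational contraction $\phi_i : X \dashrightarrow X_i$. If $D := K_X + \Delta + A + \Gamma$ is not pseudo-effective, the MMP terminates in a Mori fiber space $g_i : X_i \to Z_i$ by standard arguments. The case we actually need is $D$ on the boundary of $\overline{NE}^1(X)$: here $D$ is pseudo-effective but not big, since $D - \epsilon H$ fails to be pseudo-effective for any $\epsilon > 0$. The scaling parameter $t$ in the MMP with scaling by $H$ must therefore decrease all the way to $0$ at the final stage, producing an extremal contraction $g_i : X_i \to Z_i$ of fiber type on which $\phi_{i*}D$ is numerically trivial on fibers, as required.

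For the finiteness, I would invoke Corollary 1.1.5 of \cite{bchm06}, which asserts that as $\Gamma$ varies over a rational polytope inside $\mathcal{E}_A$, only finitely many ample models of $(X, \Delta + A + \Gamma)$ arise; since the Mori fiber space target $Z_i$ is determined up to birational equivalence by the ample model, only finitely many $g_i$ occur over such a polytope. A compactness argument on $\mathcal{E}_A$ then promotes this to the claimed global finiteness. The main obstacle is the boundary case of pseudo-effectivity: verifying that when $D \in \partial \overline{NE}^1(X)$ the MMP with scaling genuinely produces a fiber-type contraction rather than just a minimal model, and that this contraction depends on $\Gamma$ only through a finite combinatorial datum. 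This is the delicate content of \cite{bchm06}, and it is precisely what makes the theorem useful for detecting coextremal rays of $\overline{NM}_1(X)$.
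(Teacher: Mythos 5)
The paper states this result as a direct citation of Corollaries~1.1.5 and~1.3.2 of \cite{bchm06} and supplies no proof of its own, so there is nothing in the paper to compare your sketch against. Evaluating it on its own terms, however, there is a genuine gap in how you handle the boundary case.

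You claim that when $D = K_X + \Delta + A + \Gamma$ lies on the pseudo-effective boundary, the $D$-MMP with scaling of $H$ must see its scaling parameter decrease to $0$, ``producing an extremal contraction $g_i : X_i \to Z_i$ of fiber type.'' That is not what happens. The pushforward of $D$ along each step of the MMP remains pseudo-effective, so the program can never encounter a fiber-type extremal contraction: at each step the contracted ray $R$ satisfies $\phi_{i*}D \cdot R < 0$, and if that contraction were of fiber type the curves spanning $R$ would cover $X_i$, which is incompatible with $\phi_{i*}D$ being pseudo-effective. What actually happens is that the $D$-MMP with scaling terminates in a log terminal model $X'$ on which $\phi_* D$ is nef but not big. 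The Mori fiber space structure then comes from a separate mechanism: since $\phi_*(\Delta + A + \Gamma)$ is klt and big, basepoint freeness makes the nef divisor $\phi_* D$ semi-ample, giving a morphism $g' : X' \to Z'$ with $\dim Z' < \dim X'$ on which $\phi_* D$ is relatively trivial; a further relative MMP over $Z'$ is then needed to bring the relative Picard number down to one. You flag the boundary case as ``the delicate content'' in your last paragraph, but the mechanism you propose earlier does not supply it --- it conflates termination of the scaling procedure with the appearance of a fiber space. On the finiteness side, the statement requires finitely many \emph{sources} $X_i$ together with their fibrations, not merely finitely many targets $Z_i$; to get this one should cite the finiteness of weak log terminal models in \cite{bchm06}, rather than reducing to finitely many ample models as you do.
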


Note that $V$ is not a subspace of $N^{1}(X)$; we need to have a finite dimensional
space of actual divisors, not numerical classes.
However, we would like to have a finiteness statement for certain
open subsets $U$ in $N^{1}(X)$.  So, we apply Lemma \ref{rescaleampleklt} to find
finitely many divisors such that
every element of $U$ has a klt representative in the space spanned by these divisors.
In this way we obtain:

\begin{prop} \label{mmpuse}
Let $(X,\Delta)$ be a $\mathbb{Q}$-factorial klt pair.
Suppose that $B$ is a big effective
$\mathbb{R}$-divisor such that $(X,\Delta + B)$ is klt.  Then there is some
neighborhood $U \subset N^{1}(X)$ of $[K_{X} + \Delta + B]$ and a finite set of movable
curves $\{ C_{i} \}$ so that every class $[\alpha] \in U$ that lies on the pseudo-effective boundary
satisfies $[\alpha] \cdot C_{i}=0$ for some $C_{i}$.
\end{prop}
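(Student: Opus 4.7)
The plan is to reduce Proposition \ref{mmpuse} to Theorem \ref{mmptheorem}. That theorem requires a fixed ample divisor and a fixed finite-dimensional subspace $V$ of real Weil divisors, so the main technical step is to assemble a single such pair $(A', V)$ in which every numerical class in a suitable neighborhood $U$ of $[K_X + \Delta + B]$ is represented by an effective klt divisor of the form $A' + \Gamma$ with $\Gamma \in V$.

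\emph{Setup.} Since $B$ is big and $(X, \Delta + B)$ is klt, Kodaira's lemma combined with Lemma \ref{ampleqlinearklt} yields an ample $\mathbb{R}$-divisor $A$ and an effective divisor $\Gamma_0$ with $B \sim_\mathbb{R} A + \Gamma_0$ and $(X, \Delta + A + \Gamma_0)$ klt. Pick ample divisors $H_1, \ldots, H_m$ whose classes span $N^1(X)$, and fix $\epsilon \in (0, 1/2]$ small enough that the class of $A - \epsilon\sum_j H_j$ is still ample. Apply Lemma \ref{rescaleampleklt} to the klt pair $(X, \Delta + \Gamma_0)$ with the ample divisors $A - \epsilon\sum_j H_j$ and $H_1, \ldots, H_m$ to obtain effective $\mathbb{R}$-linearly equivalent representatives $A'$ (of $A - \epsilon\sum_j H_j$) and $W_j$ (of $H_j$) such that $(X, \Delta + A' + \Gamma_0 + \sum_j a_j W_j)$ is klt for all $a_j \in [0, 2\epsilon]$. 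Let $V$ be the finite-dimensional subspace of real Weil divisors spanned by $\Gamma_0$ and the $W_j$; we will feed $A'$ and $V$ into Theorem \ref{mmptheorem}.

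\emph{Finiteness.} Shrink $U$ so every $[\alpha] \in U$ satisfies $[\alpha] - [K_X + \Delta + B] = \sum_j c_j [H_j]$ with $|c_j| < \epsilon$. Using $A \sim_\mathbb{R} A' + \epsilon\sum_j W_j$ we find
\[
[\alpha] = [K_X + \Delta + A' + \Gamma], \qquad \Gamma := \Gamma_0 + \sum_j (c_j + \epsilon) W_j \in V,
\]
with $\Gamma$ effective and $(X, \Delta + A' + \Gamma)$ klt; hence $\Gamma \in \mathcal{E}_{A'}$ whenever $[\alpha]$ is pseudo-effective. If $[\alpha]$ lies on the pseudo-effective boundary, Theorem \ref{mmptheorem} supplies a birational contraction $\phi : X \dashrightarrow X_i$ and a Mori fiber space $g_i : X_i \to Z_i$, with only finitely many such $(X_i, g_i)$ arising as $[\alpha]$ varies over $U$. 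For each such MFS, choose a movable curve $C_i'$ in a general fiber of $g_i$; since $\rho(X_i/Z_i) = 1$, all curves contracted by $g_i$ are numerically proportional, so the push-forward class $[C_i]$ to $X$ (through any common resolution of $X \dashrightarrow X_i$) depends only on $(X_i, g_i)$. By Lemma \ref{conepushforward}, $C_i$ is movable, and we take $\{C_i\}$ as our finite list.

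\emph{Verification.} Given $[\alpha] \in U$ on the pseudo-effective boundary with associated MFS $g_i$, fix a common resolution $p : Y \to X$, $q : Y \to X_i$. The negativity lemma for the $K_X + \Delta + A' + \Gamma$-MMP yields $p^\ast \alpha = q^\ast(\phi_\ast \alpha) + F$ with $F \geq 0$ and $F$ $q$-exceptional. Choose the representative $C_i'$ general enough to avoid the codimension-$\geq 2$ locus $q(F)$ (a choice that does not change $[C_i]$); then the strict transform $\tilde C_i'$ misses $F$, and
\[
\alpha \cdot C_i = p^\ast \alpha \cdot \tilde C_i' = \phi_\ast \alpha \cdot C_i' + F \cdot \tilde C_i' = 0,
\]
since $\phi_\ast \alpha = K_{X_i} + \phi_\ast(\Delta + A' + \Gamma)$ vanishes on curves contracted by $g_i$ and $F \cdot \tilde C_i' = 0$ by general position. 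The main obstacle is the Setup paragraph: arranging a coordinated ample divisor $A'$, finite-dimensional space $V$, and klt-preserving effective representatives $W_j$ that simultaneously control every numerical class in $U$; once that bookkeeping is in place, Theorem \ref{mmptheorem} supplies the finiteness and the usual MMP intersection-theoretic arguments finish the proof.
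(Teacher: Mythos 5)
Your proposal is correct and follows essentially the same route as the paper's proof: use bigness of $B$ together with Lemma \ref{rescaleampleklt} to manufacture a fixed ample $A'$ and a finite-dimensional space $V$ of real Weil divisors so that every class near $[K_X+\Delta+B]$ has a klt representative $A'+\Gamma$ with $\Gamma \in V$, then invoke Theorem \ref{mmptheorem} and pick general movable curves in the Mori fibers. The bookkeeping in your setup paragraph is packaged a bit differently (a spanning set $\{H_j\}$ with a uniform $\epsilon$-box, versus the paper's $\tau$-rescaling of $B+\tau E$ and a small ample $A$), but these are interchangeable. The one place your exposition diverges in flavor is the verification: you invoke the negativity lemma on a common resolution and arrange the curve to avoid $q(F)$, whereas the paper notes directly that $\phi^{-1}$ is an isomorphism near a general fiber curve avoiding the codimension-$\geq 2$ non-isomorphism locus, which gives $D\cdot C_i = \phi_*D\cdot B_i = 0$ without any explicit appeal to negativity. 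These are the same computation in disguise. One small imprecision to flag: the pushed-forward class $[C_i]$ depends not just on the Mori fibration $g_i\colon X_i\to Z_i$ but also on the birational contraction $\phi\colon X\dashrightarrow X_i$; finiteness of the $\{C_i\}$ therefore really rests on finiteness of these contractions (which is indeed part of what \cite{bchm06}, Corollary 1.1.5 provides), not merely on finiteness of the targets $g_i$.
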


\begin{proof}
First we apply Lemma \ref{rescaleampleklt} to restrict our attention to a finite set of
Weil divisors.

Since $B$ is big, it is numerically equivalent to $H + E$
for some ample $\mathbb{R}$-divisor $H$ and effective $\mathbb{R}$-divisor $E$.  For sufficiently small
$\tau > 0$, the pair $(X,\Delta + B + \tau E)$ is still klt, and then so is the pair
$(X,\Delta + (1-\tau)B + \tau E)$.  Let $A$ be some sufficiently small ample divisor so that
$\tau H - A$ is ample.  If we replace $A$ by some $\mathbb{R}$-linearly equivalent divisor,
we can ensure that
\begin{equation*}
\left( X,\Delta + A + (1-\tau) B + \tau E \right)
\end{equation*}
is klt.

Let $\{ H_{j} \}_{j=1}^{m}$ be a finite set of ample divisors such that the convex hull of the $[H_{j}]$
contains an open set around $[\tau H - A]$ in $N^{1}(X)$.  Choose an
open neighborhood $U'$ of $[B-A]$ that is contained in the convex hull of the classes
$[B - \tau H + H_{j}]$.
We may apply Lemma \ref{rescaleampleklt} to $(X,\Delta + A + (1-\tau)B + \tau E)$ and
the $H_{j}$ to obtain a finite set of ample divisors $W_{j}$.
Let $V$ be the vector space of $\mathbb{R}$-Weil divisors spanned by the irreducible components
of $(1-\tau)B + \tau E$ and of the $W_{j}$.  Thus, $V$ is a finite dimensional vector space
of Weil divisors so that every class in $U'$ has an effective representative
$\Gamma \in V$ with $(X,\Delta + A + \Gamma)$ klt.

Now we apply Theorem \ref{mmptheorem} with $V$ and $A$ as chosen.
According to the first part of the theorem, every class in $U'$ has a representative
$\Gamma$ so that, setting $D := K_{X} + \Delta + A + \Gamma$, we can run the $D$-minimal model program
with scaling.  If $D$ is on the pseudo-effective boundary, we obtain a birational
contraction $\phi_{i}: X \dashrightarrow X_{i}$ with a Mori fibration
$g_{i}: X_{i} \to Z_{i}$, where $K_{X_{i}} + \phi_{i*}(\Delta + A + \Gamma)$ vanishes on every
curve contracted by $g_{i}$.  Pick a curve $B_{i}$ in a general fiber of $g_{i}$ that
avoids the (codimension at least 2) locus where $\phi^{-1}: X_{i} \dashrightarrow X$ is not an isomorphism.
If we choose $B_{i}$ sufficiently general, then it belongs to a family of curves dominating $X_{i}$.
Define $C_{i}$ to be the image of $B_{i}$ under $\phi_{i}^{-1}$.  Of course $C_{i}$ is also a movable curve,
and since $\phi$ is an isomorphism on a neighborhood of $C_{i}$ we have $D \cdot C_{i} = 0$.

According to the second part of Theorem \ref{mmptheorem}, as we vary $\Gamma$
we obtain only finitely many Mori fibrations $g_{i}: X_{i} \to Z_{i}$.
Applying this construction to each fibration in turn yields a finite set of curves $\{ C_{i} \}$.
If we let $U$ be the open neighborhood of 
$K_{X} + \Delta + B$ given by $U' + [K_{X} + \Delta + A]$, then this finite
set of curves satisfies the conclusion.
\end{proof}

\begin{rmk} \label{rationalremark}
I am unable to show that the curves in Theorem \ref{firsttheorem} can be chosen to be rational curves.
In the proof of Proposition \ref{mmpuse}, we needed to choose curves
that avoided certain codimension $2$ subvarieties.  The obstacle to
finding rational curves with this property is that the ambient variety may be singular.
It is conjectured that one can find rational curves in the smooth locus of any $\mathbb{Q}$-Fano
variety with klt singularities, which would suffice for our purposes.

If we are willing to replace the $C_{i}$ by classes $\alpha_{i}$ that do not necessarily represent curves, we
can say something more.  Since a Mori fibration has relative Picard number $1$ and the general
fiber is a log Fano variety, the class of any curve on the fiber is proportional to the class
of a rational curve.  Thus we may choose $\alpha_{i}$ to be the numerical pullback of the class
of a nef rational curve.  See \cite{araujo08} for more details on numerical pullbacks.
\end{rmk}

Using compactness we can extend Proposition \ref{mmpuse} to a global result.

\begin{cor} \label{compactmmpuse}
Let $(X,\Delta)$ be a $\mathbb{Q}$-factorial klt pair.
Suppose that $\mathcal{S} \subset \overline{NE}^{1}(X)$ is a set of divisor classes satisfying
\begin{enumerate}
\item $\mathcal{S}$ is closed.
\item For each element $\beta \in \mathcal{S}$, there is some big effective divisor $B$ such that
$(X,\Delta + B)$ is klt and $[K_{X} + \Delta + B] = c \beta$ for some $c>0$.
\end{enumerate}
Then there is a finite set of movable curves $\{ C_{i} \}$ so
that every class $\alpha \in \mathcal{S}$
that lies on the pseudo-effective boundary satisfies $\alpha \cdot C_{i}=0$
for some $C_{i}$.
\end{cor}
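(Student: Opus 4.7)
The plan is to use compactness to reduce the global statement to finitely many applications of Proposition~\ref{mmpuse}. The set $\mathcal{S}$ is a union of rays and therefore typically non-compact, but the set of rays it contains is compact. Specifically, since $\overline{NE}^{1}(X)$ is a strictly salient closed convex cone (for any ample class $[A]$, the class $[A]^{n-1}$ pairs strictly positively with every non-zero pseudo-effective class), I may fix an affine hyperplane $H \subset N^{1}(X)$ such that $\overline{NE}^{1}(X) \cap H$ is compact and meets every non-zero ray of the cone exactly once. The slice $\mathcal{S}' := \mathcal{S} \cap H$ is then closed in this compact set, hence compact.

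For each $\gamma \in \mathcal{S}'$, hypothesis (2) provides a big effective $B_\gamma$ with $(X, \Delta + B_\gamma)$ klt and $[K_{X} + \Delta + B_\gamma] = c_\gamma \gamma$ for some $c_\gamma > 0$. Applying Proposition~\ref{mmpuse} yields an open neighborhood $U_\gamma \subset N^{1}(X)$ of $c_\gamma \gamma$ together with a finite set of movable curves $\{C_{\gamma, i}\}$ annihilating every pseudo-effective boundary class in $U_\gamma$. The key observation is that this conclusion is invariant under positive scaling, since each $C_{\gamma, i}^{\perp}$ is a linear subspace. Thus the open cone $\widetilde{U}_\gamma := \bigcup_{t > 0} t U_\gamma$ still has the property that every pseudo-effective boundary class in it is annihilated by one of the $C_{\gamma, i}$, and $\widetilde{U}_\gamma \cap H$ is an open neighborhood of $\gamma$ in $H$.

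The collection $\{\widetilde{U}_\gamma \cap H\}_{\gamma \in \mathcal{S}'}$ then forms an open cover of the compact set $\mathcal{S}'$, so finitely many $\gamma_1, \ldots, \gamma_k$ suffice. I take $\{C_i\}$ to be the union of the associated finite curve collections. Any non-zero $\alpha \in \mathcal{S}$ on the pseudo-effective boundary has a unique positive rescaling into $\mathcal{S}'$, which lies in some $\widetilde{U}_{\gamma_j}$; since $\widetilde{U}_{\gamma_j}$ is a cone, $\alpha$ also lies in $\widetilde{U}_{\gamma_j}$, and is thus annihilated by some $C_i$. The case $\alpha = 0$ is trivial.

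The essential content of the argument is already packaged in Proposition~\ref{mmpuse}, so the work here is primarily bookkeeping: choosing a compact slice that transverses every ray of $\overline{NE}^{1}(X)$, and observing that the proposition's local conclusion automatically upgrades from a neighborhood of a point to an open cone containing the corresponding ray. Neither step presents a serious obstacle; the only mild technical point is confirming strict salience of $\overline{NE}^{1}(X)$, which is immediate from pairing against $[A]^{n-1}$ for an ample $A$.
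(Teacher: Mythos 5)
Your proof takes essentially the same route as the paper's: apply Proposition~\ref{mmpuse} at each point of a compact slice, note that the conclusion is homogeneous of degree zero so it upgrades to an open cone, and extract a finite subcover. The one place where the two treatments diverge slightly is in what compact set is covered. You cover $\mathcal{S}' = \mathcal{S} \cap H$, and your last step asserts that every nonzero $\alpha \in \mathcal{S}$ rescales into $\mathcal{S}'$; this is only true when $\mathcal{S}$ is saturated under positive scaling, which is not among the stated hypotheses (it \emph{is} satisfied in the intended application, Proposition~\ref{Vfiniteness}, where $\mathcal{S}$ is the conical set of $V$-bounding divisors). The paper instead passes to $\mathcal{R}$, the cone over $\mathcal{S}$, and covers $\mathcal{R} \cap Q$; that way the rescaled $\alpha$ automatically lies in the covered set, though the compactness of $\mathcal{R} \cap Q$ is where the same saturation property quietly enters. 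So the two arguments carry the same implicit assumption, just lodged in different steps; if you want your write-up to match the hypotheses as literally stated, replace $\mathcal{S}'$ by $\mathcal{R} \cap H$ and observe that hypothesis (2) still furnishes the needed $B$ for any point of $\mathcal{R} \cap H$, since some positive multiple of that point lies in $\mathcal{S}$.
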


\begin{proof}
Let $\mathcal{R}$ be the cone over $\mathcal{S}$.
Suppose that $\gamma \in \mathcal{R}$.
Since some positive multiple of $\gamma$ is in $\mathcal{S}$, we have
$[K_{X} + \Delta + B] = c \gamma$ for some $c>0$ and for some big effective divisor $B$
with $(X, \Delta + B)$ klt.  Apply Proposition \ref{mmpuse}
to $B$ to find an open neighborhood $U$ of $[K_{X} + \Delta + B]$ and finitely many movable curves
$C_{i}$ such that every $\alpha \in U$ on the pseudo-effective boundary
satisfies $\alpha \cdot C_{i} = 0$ for some $C_{i}$.
If we rescale $U$ by $1/c$ we find a
neighborhood $U_{\gamma}$ of $\gamma$ such that every $\alpha \in U_{\gamma}$ on the
pseudo-effective boundary satisfies $\alpha \cdot C_{i} = 0$ for some $C_{i}$.

Now fix some compact slice $Q$ of the cone $\overline{NE}^{1}(X)$.  Since
$\mathcal{S}$ is closed, $\mathcal{R} \cap Q$ is compact.
To each point $\gamma \in \mathcal{R} \cap Q$ we have associated finitely many curves $C_{i}$
and an open set $U_{\gamma}$ containing $\gamma$.  The $U_{\gamma}$ define an open cover of
$\mathcal{R} \cap Q$, so by compactness there is a finite subcover.  The finite set of corresponding curves
satisfies the conclusion.
\end{proof}

\section{The Cone of Nef Curves}

We now analyze the cone of nef curves.  Suppose that $(X,\Delta)$ is a klt pair.
We start by identifying certain rays in $\overline{NM}_{1}(X)$.

\begin{defn}
A \emph{coextremal} ray $\mathbb{R}_{\geq 0}[\alpha] \subset N_{1}(X)$
is a $(K_{X}+\Delta)$-negative ray of $\overline{NM}_{1}(X)$ that
is extremal for $\overline{NE}_{1}(X)_{K_{X} + \Delta \geq 0} + \overline{NM}_{1}(X)$.
That is, it must satisfy:
\begin{enumerate}
\item $\alpha \in \overline{NM}_{1}(X)$ has $(K_{X}+\Delta) \cdot \alpha < 0$.
\item If $\beta_{1},\beta_{2}$ are classes in
$\overline{NE}_{1}(X)_{K_{X} + \Delta \geq 0} + \overline{NM}_{1}(X)$ with
\begin{equation*}
\beta_{1} + \beta_{2} \in \mathbb{R}_{\geq 0}\alpha
\end{equation*}
then $\beta_{1},\beta_{2} \in \mathbb{R}_{\geq 0}\alpha$.
\end{enumerate}
\end{defn}

We also need a ``dual'' notion for divisors.

\begin{defn}
A \emph{bounding} divisor $D$ is any non-zero
$\mathbb{R}$-Cartier divisor $D$ satisfying the following properties:
\begin{enumerate}
\item $D \cdot \alpha \geq 0$ for every class $\alpha$ in $\overline{NE}_{1}(X)_{K_{X} + \Delta \geq 0} +
\overline{NM}_{1}(X)$.
\item $D^{\perp}$ contains some coextremal ray.
\end{enumerate}
The zero divisor is not considered to be a bounding divisor.

We will often need a more restrictive notion.  Suppose that
$V \subset N_{1}(X)$ is any subset.  If a bounding divisor
satisfies $D \cdot \alpha \geq 0$ for every $\alpha \in V$, I will call it
a $V$-bounding divisor.  Note that as $V$ gets larger, the set of $V$-bounding
divisors gets smaller.
\end{defn}

Bounding divisors support the cone
$\overline{NE}_{1}(X)_{K_{X} + \Delta \geq 0} + \overline{NM}_{1}(X)$ along
a face that includes some coextremal ray.  In particular, every bounding divisor is on the
pseudo-effective boundary.  The following lemma is essentially the same as \cite{araujo05}, Claim 4.2.

\begin{lem} \label{boundingform}
Let $(X,\Delta)$ be a klt pair such that $K_{X} + \Delta$ is not pseudo-effective.
Let $V$ be a closed convex cone containing $\overline{NE}_{1}(X)_{K_{X} + \Delta = 0} - \{ 0 \}$ in
its interior.  Then every $V$-bounding divisor $D$ can be written as
\begin{equation*}
D = \delta_{D} (K_{X} + \Delta) + A_{D}
\end{equation*}
for some ample $A_{D}$ and some $\delta_{D} > 0$.
\end{lem}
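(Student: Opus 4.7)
The plan is to find $\delta_D > 0$ such that $A_D := D - \delta_D(K_X+\Delta)$ is ample, verified via Kleiman's criterion. A preliminary observation: $D$ is not a scalar multiple of $K_X + \Delta$, because $D$ is bounding, so $D^\perp$ contains a generator $\alpha_0$ of a coextremal ray with $(K_X+\Delta) \cdot \alpha_0 < 0$; a relation $D = c(K_X+\Delta)$ would force $c = 0$, hence $D = 0$, contradicting the definition.

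Next I would show $D \cdot \alpha > 0$ for every nonzero $\alpha \in \overline{NE}_1(X)_{K_X+\Delta \geq 0}$. Since $V$ contains $\overline{NE}_1(X)_{K_X+\Delta = 0} - \{0\}$ in its interior, $V$ has nonempty interior in $N_1(X)$. Because $D$ is linear and non-negative on $V$ but not identically zero, it cannot vanish at any interior point of $V$: such a zero would, by linearity, spread to a whole neighborhood and force $D \equiv 0$. In particular $D > 0$ on $\overline{NE}_1(X)_{K_X+\Delta = 0} - \{0\}$. For the remaining case $(K_X+\Delta)\cdot\alpha > 0$, I use interpolation. By Theorem \ref{bdppmovable} we have $\overline{NM}_1(X) \subseteq \overline{NE}_1(X)$, so $\alpha_0 \in \overline{NE}_1(X)$, and the segment joining $\alpha$ and $\alpha_0$ stays in $\overline{NE}_1(X)$ and meets $(K_X+\Delta)^\perp$ at some $\gamma$. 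Pointedness of $\overline{NE}_1(X)$ prevents $\gamma = 0$, and if $D \cdot \alpha = 0$ then $D \cdot \gamma = 0$ as a convex combination of zeros, contradicting the positivity just established.

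Finally I would produce $\delta_D$ on a compact slice $Q$ of $\overline{NE}_1(X)$. Writing $f(\alpha) = D \cdot \alpha$ and $g(\alpha) = (K_X+\Delta) \cdot \alpha$, Kleiman's criterion asks for $\delta > 0$ with $f > \delta g$ on $Q$, equivalently
\begin{equation*}
\sup_{Q \cap \{g<0\}} f/g \; < \; \delta \; < \; \inf_{Q \cap \{g>0\}} f/g.
\end{equation*}
The previous step makes $f$ strictly positive on the compact set $Q \cap \{g \geq 0\}$, which forces the right-hand infimum to be strictly positive. The hypothesis on $V$ forces $g \leq -\epsilon$ for some $\epsilon > 0$ on the compact set $Q \setminus \mathrm{int}(V)$, since any point of $Q$ with $g = 0$ already lies in $\mathrm{int}(V)$; this keeps the left-hand supremum finite. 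Strict inequality between the two bounds follows from a short interpolation: any segment in $Q$ from $\alpha_1$ with $g(\alpha_1) > 0$ to $\alpha_2$ with $g(\alpha_2) < 0$ crosses $\{g = 0\}$ at a nonzero point of $\overline{NE}_1(X)$ where $f > 0$, and this positivity rearranges into $f(\alpha_2)/g(\alpha_2) < f(\alpha_1)/g(\alpha_1)$. Any $\delta_D$ in the intersection of the resulting open interval with $(0,\infty)$ yields the desired $A_D$. The one delicate point is the $(K_X+\Delta)\cdot\alpha > 0$ sub-case of the middle step: without the inclusion $\overline{NM}_1(X) \subseteq \overline{NE}_1(X)$, interpolating between $\alpha$ and $\alpha_0$ might leave $\overline{NE}_1(X)$ and the whole argument would collapse.
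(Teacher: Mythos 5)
Your proof is correct, and it supplies a detail the paper's proof glosses over. The core of both arguments is the same claim: a $V$-bounding $D$ is strictly positive on $\overline{NE}_{1}(X)_{K_{X}+\Delta\geq 0}-\{0\}$. The paper establishes positivity on $\overline{NE}_{1}(X)_{K_{X}+\Delta = 0}-\{0\}$ (via the interior of $V$) and then writes ``and thus also positive on all of $\overline{NE}_{1}(X)_{K_{X}+\Delta\geq 0}-\{0\}$'' without justification; your interpolation of $\alpha$ with the coextremal generator $\alpha_{0}$ across the hyperplane $(K_{X}+\Delta)^{\perp}$, using $\overline{NM}_{1}(X)\subseteq\overline{NE}_{1}(X)$ and pointedness of $\overline{NE}_{1}(X)$, is exactly what makes that step legitimate. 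Where the two proofs genuinely diverge is in how they convert the positivity into ampleness of $D-\delta(K_{X}+\Delta)$. The paper argues by contradiction: if the interior of the two-dimensional cone $\sigma=\mathbb{R}_{\geq 0}[D]+\mathbb{R}_{\geq 0}[-K_{X}-\Delta]$ missed the ample cone, a separating curve class $\alpha$ would lie in $\overline{NE}_{1}(X)_{K_{X}+\Delta\geq 0}-\{0\}$ with $D\cdot\alpha\leq 0$, contradicting the positivity claim. You instead construct $\delta_{D}$ directly by comparing the slopes $f/g$ on a compact slice $Q$. Both are valid; the separating-hyperplane route is shorter because it sidesteps the compactness bookkeeping that your slope argument needs (the sup over $\{g<0\}$ being finite, the inf over $\{g>0\}$ being strictly positive, and the strict gap between them, all of which you handle, though somewhat heavily — noting that $f/g$ escapes to $\mp\infty$ near $\{g=0\}$ because $f>0$ there, so the extrema are attained on compact subsets away from $\{g=0\}$ where your two-point interpolation gives a strict inequality, would tighten that last paragraph). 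One minor remark: your preliminary observation that $D$ is not proportional to $K_{X}+\Delta$, while true, is not actually invoked later in your argument.
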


For convenience we will recall the proof.

\begin{proof}
Following \cite{araujo05}, we suppose the lemma fails and derive a contradiction.
That is, suppose there is some $V$-bounding divisor $D$ such that the interior of the cone
\begin{equation*}
\sigma = \mathbb{R}_{\geq 0} [D] + \mathbb{R}_{\geq 0} [-K_{X} - \Delta]
\end{equation*}
never intersects the ample cone.  Then there is a curve class $\alpha$ for which
the cone $\sigma$ is contained in $\alpha_{\leq 0}$, but the ample cone is contained in $\alpha_{> 0}$.
By Kleiman's criterion $\alpha$ is in the closed cone of effective curves;
in particular $\alpha \in \overline{NE}_{1}(X)_{K_{X} + \Delta \geq 0}$.

Because $D$ is a $V$-bounding divisor and $V$ is not contained in any hyperplane, $D$ must be positive
on the interior of $V$.  So $D$ is positive on $\overline{NE}_{1}(X)_{K_{X} + \Delta = 0} - \{ 0 \}$, and thus
also positive on all of $\overline{NE}_{1}(X)_{K_{X} + \Delta \geq 0} - \{ 0 \}$.  In particular we must
have $D \cdot \alpha > 0$, contradicting $\sigma \subset \alpha_{\leq 0}$.
\end{proof}

Now we apply the results of the previous section.

\begin{prop} \label{Vfiniteness}
Let $(X,\Delta)$ be a $\mathbb{Q}$-factorial klt pair.
Let $V$ be a closed convex cone containing $\overline{NE}_{1}(X)_{K_{X} + \Delta = 0} - \{ 0 \}$ in its
interior.  There is a finite set of movable curves $\{ C_{i} \}$ such that for any $V$-bounding divisor $D$
there is some $C_{i}$ for which $D \cdot C_{i} = 0$.
\end{prop}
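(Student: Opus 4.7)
The plan is to apply Corollary \ref{compactmmpuse} to the set of unit-norm $V$-bounding divisor classes.  We may assume $K_X + \Delta$ is not pseudo-effective: otherwise no movable curve is $(K_X+\Delta)$-negative, so no coextremal rays exist and the proposition is vacuous.  Fix any norm on $N^1(X)$ and set
\[
\mathcal{S} = \{ [D] \in \overline{NE}^1(X) : D \text{ is } V\text{-bounding and } \|[D]\| = 1 \}.
\]
Since any $V$-bounding divisor is a positive scalar of an element of $\mathcal{S}$, it suffices to detect every class in $\mathcal{S}$ by finitely many curves.

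The main technical step is to show $\mathcal{S}$ is closed.  Suppose $[D_n] \in \mathcal{S}$ converges to $[D]$ with $\|[D]\|=1$, so $D \neq 0$.  By continuity $D$ is non-negative on $\overline{NE}_1(X)_{K_X+\Delta \geq 0} + V + \overline{NM}_1(X)$, and since $V$ is full-dimensional and $D \neq 0$, $D$ is strictly positive on $\mathrm{int}(V)$ and hence on $\overline{NE}_1(X)_{K_X+\Delta=0} - \{0\}$.  Pick unit-norm $\alpha_n$ on the coextremal ray of $D_n$; after extracting a subsequence, $\alpha_n \to \alpha \in \overline{NM}_1(X)$ with $\|\alpha\|=1$, $D \cdot \alpha = 0$, and $(K_X+\Delta) \cdot \alpha \leq 0$.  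If this intersection were zero, $\alpha \in \overline{NE}_1(X)_{K_X+\Delta=0} - \{0\}$ would force $D \cdot \alpha > 0$, a contradiction, so $(K_X+\Delta) \cdot \alpha < 0$.  Consequently the face $F := D^\perp \cap \bigl( \overline{NE}_1(X)_{K_X+\Delta \geq 0} + \overline{NM}_1(X) \bigr)$ contains a $(K_X+\Delta)$-negative class, hence also a $(K_X+\Delta)$-negative extremal ray $R'$; this $R'$ is then extremal for the ambient sum as well, and since any extremal ray of a sum of closed convex cones lies entirely in one of the summands while $\overline{NE}_1(X)_{K_X+\Delta \geq 0}$ cannot contain a $(K_X+\Delta)$-negative ray, $R' \subset \overline{NM}_1(X)$.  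Hence $R'$ is coextremal and $D$ is $V$-bounding, so $[D] \in \mathcal{S}$.

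Condition (2) of Corollary \ref{compactmmpuse} is satisfied via Lemmas \ref{boundingform} and \ref{ampleqlinearklt}: given $\beta \in \mathcal{S}$, Lemma \ref{boundingform} supplies $\delta_\beta > 0$ and an ample $A_\beta$ with $\beta = \delta_\beta(K_X+\Delta) + [A_\beta]$, and Lemma \ref{ampleqlinearklt} then produces an effective $B_\beta$ that is $\mathbb{R}$-linearly equivalent to $A_\beta/\delta_\beta$ with $(X, \Delta + B_\beta)$ klt; since $B_\beta$ is big, $(1/\delta_\beta)\beta = [K_X + \Delta + B_\beta]$ is in the required form.  The corollary then yields finitely many movable curves $\{C_i\}$ such that every class of $\mathcal{S}$ on the pseudo-effective boundary — and every class of $\mathcal{S}$ does lie on the boundary, supported by its coextremal ray — is orthogonal to some $C_i$; rescaling handles every $V$-bounding divisor.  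The main obstacle is the closure argument above: the limit of coextremal rays risks failing to be coextremal either by becoming $(K_X+\Delta)$-trivial (excluded by the hypothesis on $V$) or by losing extremality (restored by the face/summand argument).
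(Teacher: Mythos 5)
Your proof takes essentially the same route as the paper: normalize, apply Corollary \ref{compactmmpuse} to the set of $V$-bounding divisor classes, and verify hypothesis (2) via Lemma \ref{boundingform} together with Lemma \ref{ampleqlinearklt}. Where you differ is in the verification of closedness, and your version is the more careful one. The paper asserts that ``$V$-bounding divisors are precisely the pseudo-effective divisors satisfying the closed condition'' of non-negativity on $\overline{NE}_{1}(X)_{K_{X}+\Delta \geq 0} + V + \overline{NM}_{1}(X)$; taken literally this ignores condition (2) in the definition of a bounding divisor (that $D^{\perp}$ contain a coextremal ray), so that equality of sets is not correct as written and the closedness is not immediate. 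Your limit argument closes exactly this gap: you extract a convergent subsequence of unit-norm coextremal classes, rule out $(K_{X}+\Delta)$-triviality using the full-dimensionality of $V$ (the one place the hypothesis on $V$ is really used), and restore extremality via the observation that any extremal ray of a sum of pointed closed convex cones lies in one of the summands, so a $(K_{X}+\Delta)$-negative extremal ray of the sum must lie in $\overline{NM}_{1}(X)$. An alternative, arguably what the paper intends, is to apply Corollary \ref{compactmmpuse} to the visibly closed cone $\mathcal{S}$ of all pseudo-effective classes non-negative on $\overline{NE}_{1}(X)_{K_{X}+\Delta \geq 0} + V + \overline{NM}_{1}(X)$; hypothesis (2) still holds for this larger $\mathcal{S}$ because the proof of Lemma \ref{boundingform} uses only non-negativity, and since the corollary's conclusion is stated only for classes on the pseudo-effective boundary (which all $V$-bounding divisors are), one never actually needs that a limit of bounding divisors remains bounding. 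Both routes are correct; yours trades a slightly longer closedness argument for working with the smaller, more natural set.
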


\begin{proof}
The statement is vacuous when the set of $V$-bounding divisors is empty (for example,
when $K_{X}+\Delta$ is pseudo-effective or when $V$ is the entire space), so we assume otherwise.

We apply Corollary \ref{compactmmpuse} to the set of $V$-bounding divisors.
The first hypothesis holds since $V$-bounding divisors are precisely
the divisors $D$ on the pseudo-effective boundary satisfying the closed condition
\begin{equation*}
D \cdot \alpha \geq 0 \textrm{ for every class } \alpha \in
\overline{NE}_{1}(X)_{K_{X} + \Delta \geq 0} + V + \overline{NM}_{1}(X).
\end{equation*}
We verify the second hypothesis by using
Lemma \ref{boundingform}.  Each $V$-bounding divisor $D$ satisfies
\begin{equation*}
\frac{1}{\delta_{D}}D = K_{X} + \Delta + \frac{1}{\delta_{D}}A_{D}
\end{equation*}
for some ample $A_{D}$ and some $\delta_{D} > 0$.
By replacing $\frac{1}{\delta_{D}}A$ by some $\mathbb{R}$-linearly equivalent effective divisor $A'$,
we may ensure that $(X,\Delta + A')$ is a klt pair.  Thus the second hypothesis holds as well.
Since every $V$-bounding divisor is on the pseudo-effective boundary, an application
of Corollary \ref{compactmmpuse} finishes the proof.
\end{proof}

\begin{cor} \label{qfactorialnefcone}
Let $(X,\Delta)$ be a $\mathbb{Q}$-factorial klt pair, and let
$V$ be a closed convex cone containing $\overline{NE}_{1}(X)_{K_{X} + \Delta = 0} - \{ 0 \}$ in its
interior.  There are finitely many movable curves $C_{i}$ such that
\begin{equation*}
\overline{NE}_{1}(X)_{K_{X} + \Delta \geq 0} + V + \overline{NM}_{1}(X) =
\overline{NE}_{1}(X)_{K_{X} + \Delta \geq 0} + V + \sum_{i=1}^{N} \mathbb{R}_{\geq 0} [C_{i}].
\end{equation*}
\end{cor}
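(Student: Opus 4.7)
The inclusion $\supseteq$ is immediate because each movable curve $C_i$ is nef, so $[C_i]\in\overline{NM}_1(X)$. For the reverse inclusion, my plan is to argue by contradiction using Proposition~\ref{Vfiniteness}: the finite set $\{C_i\}$ it provides has the property that every $V$-bounding divisor is orthogonal to some $C_i$, so it suffices to manufacture a $V$-bounding divisor that is \emph{strictly} positive on every $C_i$ out of any hypothetical counterexample.

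Suppose some class $\alpha$ lies in $\overline{NE}_1(X)_{K_X+\Delta\geq 0}+V+\overline{NM}_1(X)$ but not in the closure of the right-hand side. Hahn--Banach separation produces $D\in N^1(X)$ with $D\cdot\alpha<0$ but $D\geq 0$ on $\overline{NE}_1(X)_{K_X+\Delta\geq 0}$, on $V$, and on each $[C_i]$. Writing $\alpha=\beta+v+\gamma$ with $\gamma\in\overline{NM}_1(X)$ forces $D\cdot\gamma<0$, so $D$ is not pseudo-effective. Fix an ample class $A$ and perturb to $\tilde D:=D+\epsilon A$ with $\epsilon>0$ small enough that $\tilde D\cdot\alpha<0$; now $\tilde D$ is strictly positive on each nonzero class of $\overline{NE}_1(X)_{K_X+\Delta\geq 0}$, of $V$, and on each $[C_i]$, while still failing to be pseudo-effective. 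I then walk to the pseudo-effective boundary: let $s_0:=\inf\{s\geq 0:\tilde D+sA\in\overline{NE}^1(X)\}$, which is a positive finite number, and set $D_{s_0}:=\tilde D+s_0A$. This class lies on the pseudo-effective boundary and inherits the strict positivities of $\tilde D$ on the three subcones above.

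The heart of the argument is to show that $D_{s_0}$ is a $V$-bounding divisor; once established, Proposition~\ref{Vfiniteness} produces some $C_i$ with $D_{s_0}\cdot C_i=0$, contradicting strict positivity. Non-negativity of $D_{s_0}$ on $\overline{NE}_1(X)_{K_X+\Delta\geq 0}+\overline{NM}_1(X)+V$ is clear from the construction, with the $\overline{NM}_1(X)$ summand handled by pseudo-effectivity. For the coextremal-ray requirement, the nonzero face $D_{s_0}^\perp\cap\overline{NM}_1(X)$ has an extreme ray $\mathbb{R}_{\geq 0}\gamma_0$; the strict positivity of $D_{s_0}$ on $\overline{NE}_1(X)_{K_X+\Delta\geq 0}\setminus\{0\}$ kills any $K_X+\Delta\geq 0$ component of a decomposition of $\gamma_0$ inside $\overline{NE}_1(X)_{K_X+\Delta\geq 0}+\overline{NM}_1(X)$, so extremality within the face upgrades to extremality in the larger cone, and the same strict positivity forces $(K_X+\Delta)\cdot\gamma_0<0$ (otherwise $\gamma_0$ would lie in $\overline{NE}_1(X)_{K_X+\Delta\geq 0}\setminus\{0\}$). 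I expect this coextremality verification to be the principal bookkeeping obstacle, and it is precisely what motivates the perturbation by $\epsilon A$. A minor loose end is closedness of the right-hand side needed for clean separation: since $V$ contains $\overline{NE}_1(X)_{K_X+\Delta=0}$ in its interior and $\sum\mathbb{R}_{\geq 0}[C_i]$ is polyhedral, one verifies directly that the right-hand side is already closed, or else proves that the left-hand side is contained in the closure of the right-hand side first and handles closure by a brief separate argument.
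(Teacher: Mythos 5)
Your proposal takes essentially the same route as the paper: apply Proposition~\ref{Vfiniteness} to get the finite set $\{C_i\}$, separate a hypothetical $\alpha$ from the right-hand cone, push the separating divisor to the pseudo-effective boundary by adding multiples of an ample class, and derive a contradiction by showing the resulting class is a $V$-bounding divisor with positive intersection against every $C_i$. The paper starts from an ample $A$ and adds $\tau B$ until hitting the boundary from inside; you start from $D+\epsilon A$ outside and add $s_0 A$ to hit the boundary from outside -- a cosmetic difference only.

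One genuine (if minor) gap you should close: the paper begins with a reduction, observing that enlarging $V$ only makes the statement weaker, and then shrinks $V$ so that $\overline{NE}_1(X)+V$ is a pointed cone. This buys two things simultaneously: (i) closedness of the cone $\overline{NE}_1(X)_{K_X+\Delta\geq 0}+V+\sum\mathbb{R}_{\geq 0}[C_i]$, which you flag as a loose end, and (ii) the existence of an ample $A$ with $A>0$ on $V-\{0\}$. Without (ii), your claim that $\tilde D = D+\epsilon A$ is strictly positive on $V-\{0\}$ -- and, what you actually need, that $D_{s_0}$ is non-negative on $V$ -- can fail, since an arbitrary closed convex cone $V$ meeting the hypotheses need not lie where an ample divisor is non-negative. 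Once you insert the same shrinking reduction at the outset, both loose ends disappear and your argument is correct and essentially identical to the paper's.
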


\begin{proof}
Note that as $V$ increases, the statement of the theorem becomes easier to prove.  Therefore,
we may assume by shrinking $V$ that $\overline{NE}_{1}(X) + V$ does not contain any $1$-dimensional
subspace of $N_{1}(X)$.  In particular, this implies that there is some ample divisor $A$ that is
positive on $V - \{ 0 \}$.

Let $\{ C_{i} \}$ be the set of curves found in Proposition \ref{Vfiniteness}.
Suppose that there were some curve class $\alpha \in \overline{NM}_{1}(X)$ not contained in
$\overline{NE}_{1}(X)_{K_{X} + \Delta \geq 0} + V + \sum \mathbb{R}_{\geq 0} [C_{i}]$.
In particular, there is some divisor $B$ that is positive on
$\overline{NE}_{1}(X)_{K_{X} + \Delta \geq 0} + V + \sum \mathbb{R}_{\geq 0} [C_{i}]$,
but for which $B \cdot \alpha < 0$.

Let $A$ be an ample divisor positive on $V - \{ 0 \}$, and consider $A + \tau B$,
where $\tau > 0$ is the maximum over all $t$ such that $A + tB$ is pseudoeffective.  Then $A + \tau B$ is a
$V$-bounding divisor, but $(A + \tau B) \cdot C_{i} > 0$ for every $C_{i}$, a contradiction.
\end{proof}

We now have a finiteness statement for coextremal rays associated to $\mathbb{Q}$-factorial klt pairs $(X,\Delta)$.
This finiteness statement is essentially equivalent to Theorem \ref{firsttheorem}.
Thus, we only need to reduce the problem for general dlt pairs $(X,\Delta)$
to this specific case.  The following lemma from \cite{km98} shows that dlt pairs can be approximated by
klt pairs on any projective variety.  (Although \cite{km98} assumes
that $K_{X} + \Delta$ is $\mathbb{Q}$-Cartier, the proof works
equally well for $\mathbb{R}$-Cartier divisors.)

\begin{lem}[\cite{km98}, Proposition 2.43] \label{dltpairs}
Suppose that $(X,\Delta)$ is a dlt pair and that $A$ is an ample Cartier divisor on $X$.
Let $\Delta_{1}$ be an effective $\mathbb{Q}$-Weil divisor such that
$\Supp(\Delta) = \Supp(\Delta_{1})$.
Then there is some rational $c>0$ and effective $\mathbb{Q}$-divisor $D$,
numerically equivalent to $\Delta_{1} + cA$,
such that $(X,\Delta + \tau D - \tau \Delta_{1})$ is klt for any sufficiently small $\tau > 0$.
\end{lem}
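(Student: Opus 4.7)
My plan is to produce $D$ by a Bertini argument on a log resolution of $(X,\Delta)$ and to verify by discrepancy calculation that the perturbed pair remains klt for $\tau$ sufficiently small. First I fix a log resolution $f : Y \to X$ of $(X, \Delta + \Delta_{1})$; since $\operatorname{Supp}(\Delta) = \operatorname{Supp}(\Delta_{1})$ this is just a log resolution of $(X,\Delta)$. Writing
\[
K_{Y} + f_{*}^{-1}\Delta \;=\; f^{*}(K_{X} + \Delta) + \sum_{i} a_{i} E_{i},
\]
with $E_{i}$ running over the $f$-exceptional prime divisors, the dlt hypothesis (in the form guaranteeing a log resolution on which every exceptional discrepancy strictly exceeds $-1$) gives $a_{i} > -1$ for every $i$. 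Because there are only finitely many such $a_{i}$, and only finitely many coefficients $\delta_{j} \leq 1$ of components $B_{j}$ of $\Delta$, I obtain a uniform $\varepsilon > 0$ with $a_{i} \geq -1 + \varepsilon$ for every $i$.

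Next I choose a rational $c > 0$ and an integer $m \gg 0$ so that $m\Delta_{1}$ is integral and $m(\Delta_{1} + cA)$ is very ample on $X$. Set $D := \tfrac{1}{m}D_{0}$ for a general $D_{0} \in |m(\Delta_{1} + cA)|$; this is certainly numerically equivalent to $\Delta_{1} + cA$. Bertini's theorem applied to the pullback linear system $f^{*}|m(\Delta_{1} + cA)|$ on $Y$ (which is base-point free because $m(\Delta_{1} + cA)$ is very ample and $f$ is a morphism) lets me assume that $f^{*}D$ is reduced and meets every stratum of the simple normal crossing divisor $\sum_{i} E_{i} + f_{*}^{-1}\Delta$ transversally. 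In particular the prime components of $f^{*}D$ are distinct from those of $\sum_{i} E_{i} + f_{*}^{-1}\Delta$.

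Finally I compute the discrepancies of $(X, \Delta + \tau D - \tau \Delta_{1})$ on $Y$. On each component $B_{j}$ the coefficient drops from $\delta_{j}$ to $\delta_{j} - \tau \delta_{j}'$, where $\delta_{j}'$ is the coefficient of $B_{j}$ in $\Delta_{1}$; this is strictly less than $\delta_{j} \leq 1$, and in particular any components with $\delta_{j} = 1$ are strictly reduced below $1$. On each $f$-exceptional $E_{i}$ the discrepancy becomes
\[
a_{i} + \tau\bigl(\operatorname{ord}_{E_{i}} f^{*}\Delta_{1} - \operatorname{ord}_{E_{i}} f^{*}D\bigr),
\]
which remains above $-1$ for all small $\tau$ by the uniform bound $\varepsilon$. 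The new components contributed by $f^{*}D$ enter with coefficient $\tau < 1$. Hence $(X, \Delta + \tau D - \tau \Delta_{1})$ is klt for every sufficiently small rational $\tau > 0$. The one real obstacle is Step 1: isolating the uniform gap $\varepsilon$ requires the Szab\'o-style refinement of the definition of dlt, namely that one can find a log resolution on which all $f$-exceptional discrepancies simultaneously exceed $-1$; once this input is in hand, Bertini and elementary arithmetic finish the argument.
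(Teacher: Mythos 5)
Your proof is essentially the standard argument behind \cite[Prop.~2.43]{km98}: pass to a log resolution, produce $D$ as a general member of a base-point-free linear system via Bertini, and verify klt by a discrepancy computation; since the paper simply cites \cite{km98} for this lemma (noting only that the argument carries over to $\mathbb{R}$-Cartier $K_X + \Delta$, which yours does as well), this is the expected route and it is correct.

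Two small remarks. First, the uniform gap $\varepsilon$ that you single out as the ``one real obstacle'' is actually not needed: since $f^{*}|m(\Delta_{1}+cA)|$ is base-point-free on $Y$, a general $D_{0}$ has $\operatorname{ord}_{E_{i}}f^{*}D_{0}=0$ for every $i$, so your displayed discrepancy is $a_{i}+\tau\operatorname{ord}_{E_{i}}f^{*}\Delta_{1}\geq a_{i}>-1$ with no smallness condition on $\tau$ coming from the exceptional locus; the only constraints on $\tau$ come from the strict transforms of $\Delta$ (coefficient $\leq 1 - \tau\delta_j' < 1$) and from the new component $f_{*}^{-1}D_{0}$, whose coefficient is $\tau/m$, not $\tau$. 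Relatedly, you also need not invoke Szab\'o's characterization if you prefer to keep the dependence chain clean relative to \cite{km98}: the definition of dlt directly supplies a log resolution which is an isomorphism over the open locus where $(X,\Delta)$ is log smooth, and then all $f$-exceptional divisors have center in the complementary closed set and so have discrepancy $>-1$. Second, ``$f^{*}D$ is reduced'' should read ``$f^{*}D_{0}$ is reduced'' (or that $\operatorname{Supp} f^{*}D$ is smooth and transverse to the existing SNC divisor), since $D = \tfrac{1}{m}D_{0}$ carries coefficient $1/m$. Neither point affects the correctness of the argument.
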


We can now extend Corollary \ref{qfactorialnefcone} to the dlt case and remove the assumption of $\mathbb{Q}$-factoriality.

\begin{thrm} \label{nefcone}
Let $(X,\Delta)$ be a dlt pair and let
$V$ be a closed convex cone containing $\overline{NE}_{1}(X)_{K_{X} + \Delta = 0} - \{ 0 \}$ in its interior.
There are finitely many movable curves $C_{i}$ such that
\begin{equation*}
\overline{NE}_{1}(X)_{K_{X} + \Delta \geq 0} + V + \overline{NM}_{1}(X) =
\overline{NE}_{1}(X)_{K_{X} + \Delta \geq 0} + V + \sum_{i=1}^{N} \mathbb{R}_{\geq 0} [C_{i}].
\end{equation*}
\end{thrm}

\begin{proof}
We assume that $K_{X} + \Delta$ is not pseudo-effective, as otherwise
the statement is vacuous.

Let $A$ be some ample Cartier divisor on $X$ and $\Delta_{1}$ some effective $\mathbb{Q}$-Weil divisor
with the same support as $\Delta$.  Choose $c$ and $D$ as in Lemma \ref{dltpairs}.
Suppose that we choose $\tau$ small enough so that
\begin{equation*}
\overline{NE}_{1}(X)_{K_{X} + \Delta + \tau D - \tau \Delta_{1} = 0} \subset V
\end{equation*}
Then the result for $(\Delta,V)$ follows from the result
for $(\Delta + \tau D - \tau \Delta_{1},V)$.  Thus we may assume that $(X,\Delta)$ is klt.

Suppose that $(X,\Delta)$ is not $\mathbb{Q}$-factorial.  By Corollary 1.4.3 of
\cite{bchm06} we may find a log terminal model $(Y,\Gamma)$ over $(X,\Delta)$.  That is,
$(Y,\Gamma)$ is a $\mathbb{Q}$-factorial klt pair, and there is a small birational map
$\pi: Y \to X$ such that $\pi_{*}\Gamma = \Delta$ and $K_{Y} + \Gamma = \pi^{*}(K_{X} + \Delta)$.
Since the map $\pi_{*}: N_{1}(Y) \to N_{1}(X)$
is linear, $\pi_{*}^{-1}V$ is still closed and convex.  Furthermore, $\pi_{*}^{-1}V$ contains
$\overline{NE}_{1}(Y)_{K_{Y} + \Gamma = 0}$ in its interior, since $\pi_{*}$ on curves
is dual to $\pi^{*}$ on Cartier divisors.

Apply Corollary \ref{qfactorialnefcone} to $(Y,\Gamma)$ and $\pi_{*}^{-1}V$
to obtain the equality of cones
\begin{equation*}
\overline{NE}_{1}(Y)_{K_{Y} + \Gamma \geq 0} + \pi_{*}^{-1}V + \overline{NM}_{1}(Y) =
\overline{NE}_{1}(Y)_{K_{Y} + \Gamma \geq 0} + \pi_{*}^{-1}V + \sum_{i=1}^{N} \mathbb{R}_{\geq 0} [C_{i}].
\end{equation*}
If we take the image of these cones under the map $\pi_{*}$ and apply Lemma \ref{conepushforward}, we
obtain
\begin{equation*}
\overline{NE}_{1}(X)_{K_{X} + \Delta \geq 0} + V + \overline{NM}_{1}(X) =
\overline{NE}_{1}(X)_{K_{X} + \Delta \geq 0} + V + \sum_{i=1}^{N} \mathbb{R}_{\geq 0} [\pi_{*}C_{i}].
\end{equation*}
The pushforward of a movable curve is again movable.
Thus the coextremal rays are still spanned by curves belonging to covering families, giving
the result of Theorem \ref{nefcone} for $(X,\Delta)$.
\end{proof}

\begin{proof}[Proof of Theorem \ref{firsttheorem}:]

Let $\{ V_{j} \}$ be a countable set of nested closed convex cones containing
$\overline{NE}_{1}(X)_{K_{X} + \Delta = 0} - \{ 0 \}$ in their interiors such that
\begin{equation*}
\bigcap_{j} V_{j} = \overline{NE}_{1}(X)_{K_{X} + \Delta = 0}.
\end{equation*}
Let $\mathcal{A}_{j}$ be the finite set of curves
found by applying Theorem \ref{nefcone} to $(X,\Delta)$ and $V_{j}$.
By tossing out redundant curves, we may ensure that each curve in $\mathcal{A}_{j}$
generates a coextremal ray.
We define the countable set of curves $\mathcal{A} = \cup_{j} \mathcal{A}_{j}$.

We first show the equality of cones.
Suppose that there is some curve class $\alpha \in \overline{NM}_{1}(X)$
such that
\begin{equation*}
\alpha \notin \overline{NE}_{1}(X)_{K_{X} + \Delta \geq 0} + \overline{\sum_{\mathcal{A}}
\mathbb{R}_{\geq 0} [C_{i}]}.
\end{equation*}
Since this cone is closed and convex, there is a convex open neighborhood $U$ of the cone which also does not contain
$\alpha$.  For sufficiently high $j$, we have $V_{j} \subset U$, so
\begin{equation*}
\alpha \notin \overline{NE}_{1}(X)_{K_{X} + \Delta \geq 0} + V_{j} + \overline{\sum_{\mathcal{A}}
\mathbb{R}_{\geq 0} [C_{i}]}.
\end{equation*}
But this contradicts Theorem \ref{nefcone}.  This proves the non-trivial containment of Theorem \ref{firsttheorem}.

We also need to verify the accumulation condition for the rays generated by curves in $\mathcal{A}$.
Suppose that $\alpha$ is a point on the $(K_{X} + \Delta)$-negative
portion of the boundary of $\overline{NE}_{1}(X)_{K_{X} + \Delta \geq 0} + \overline{NM}_{1}(X)$
and that $\alpha$ does not lie on a hyperplane supporting both $\overline{NM}_{1}(X)$ and
$\overline{NE}_{1}(X)_{K_{X} + \Delta \geq 0}$.  For a sufficiently small open neighborhood $U$ of
$\alpha$ the points of $\overline{U}$ still do not lie on such a hyperplane.  We may also assume that
$\overline{U}$ is disjoint from $\overline{NE}_{1}(X)_{K_{X} + \Delta \geq 0}$.  We define
\begin{equation*}
\mathcal{P} := \overline{U} \cap \partial \left( \overline{NE}_{1}(X)_{K_{X} + \Delta \geq 0} + \overline{NM}_{1}(X) \right).
\end{equation*}

Fix a compact slice of $\overline{NE}^{1}(X)$ and let $\mathcal{D}$ denote the bounding divisors
on this slice that have vanishing intersection with some element of $\mathcal{P}$.
By construction $\mathcal{D}$ is positive on $\overline{NE}_{1}(X)_{K_{X} + \Delta \geq 0} - \{ 0 \}$.
By passing to a compact slice it is easy to see that $\mathcal{D}$ is also positive on $V_{j} - \{ 0 \}$ for
a sufficiently large $j$.  In other words, every element of $\mathcal{P}$ is on the boundary of
$\overline{NE}_{1}(X)_{K_{X} + \Delta \geq 0} + V_{j} + \overline{NM}_{1}(X)$.  By
Theorem \ref{nefcone} there are only finitely many coextremal rays that lie on this cone, and thus
only finitely many coextremal rays through $U$.  So $\alpha$
can not be an accumulation point.
\end{proof}

\subsection*{Cutkosky's Example} \label{examplesection}

In this section we give an example of a threefold for which the stronger statement
\begin{equation*} \label{movablecone} \tag{*}
\overline{NM}_{1}(X)=
\overline{NM}_{1}(X)_{K_{X} + \Delta \geq 0} + \sum \mathbb{R}_{\geq 0} [C_{i}]
\end{equation*}
does not hold for any locally discrete countable collection of curves.  Although this statement seems to be a closer
analogue of the Cone Theorem, it is much less natural from the viewpoint of the minimal model program.  The problem is that the bounding divisors are no longer of the form $K_{X}+ \Delta + A$ for an ample $A$, but
$K_{X}+ \Delta + B$ for a big $B$.  In general we cannot say anything about the singularities of such divisors
and so we can not apply Proposition \ref{mmpuse}.

\begin{rmk}
By dualizing we can transform \eqref{movablecone} to an equivalent statement concerning the structure
of the pseudo-effective cone of divisors.  To be more precise, \eqref{movablecone} implies that
the boundary of $\overline{NE}^{1}(X)$ is polyhedral inside any proper open subcone of the cone spanned by $\overline{NE}^{1}(X)$ and $[K_{X}+\Delta]$.  Note that Theorem \ref{nefcone} guarantees this polyhedral structure
along portions of the boundary of the nef cone of divisors.

It is possible that \eqref{movablecone} holds for surfaces.  It is shown in \cite{bks04}
that for a surface the portion of the pseudo-effective boundary that is not nef is locally polyhedral.
Thus, one would need to verify that coextremal rays do not accumulate where
the pseudo-effective cone and nef cone first coincide.
More generally, \cite{nakayama04} and \cite{boucksom04} use the divisorial Zariski decomposition to
prove that some portions of the boundary of $\overline{NE}^{1}(X)$ are polyhedral.
\end{rmk}

\begin{exmple} \label{cutkoskyexample}
We construct a threefold for which \eqref{movablecone} does not hold.
In particular, we construct a smooth variety $X$ such that
$-K_{X}$ is big but $\overline{NM}_{1}(X)$ is circular along certain portions of its boundary.
This example is due to Cutkosky, who uses it to find a divisor with no rational Zariski decomposition
(see \cite{cutkosky86}).

Let $Y$ be an abelian surface with Picard number at least $3$.
As $Y$ is abelian, the nef cone of divisors and the pseudo-effective cone of divisors coincide.  This
cone is circular in $N^{1}(Y)$: it consists of all the curve classes with non-negative
self intersection and non-negative intersection with some ample divisor.

Choose a divisor $L$ on $Y$ such that $-L$ is ample and
define $X$ to be the $\mathbb{P}^{1}$-bundle $X := \mathbb{P}_{Y}(\mathcal{O} \oplus \mathcal{O}(L))$
with projection $\pi: X \to Y$.  Let $S$ denote the zero section of $\pi$, that is, the
section such that $S|_{S}$ is the linear equivalence class of $L$.  Every divisor on $X$ can be written as
$aS + \pi^{*}D$ for some integer $a$ and some divisor $D$ on $Y$.  Using adjunction
we find $K_{X} = -2S + \pi^{*}L$.

By the general theory of $\mathbb{P}^{1}$-bundles, a divisor of the form $S + \pi^{*}D$ is pseudo-effective on $X$ iff there is a pseudo-effective divisor on $Y$ in the cone generated by $D$ and $D+L$.
Since $-L$ is ample, this amounts to requiring that $D$ be pseudo-effective.
Thus the pseudo-effective cone $\overline{NE}^{1}(X)$ is generated by $S$ and
$\pi^{*} \overline{NE}^{1}(Y)$.  Note that $-K_{X}$ is big since it lies in the interior of this cone.

Since $\overline{NE}^{1}(Y)$ is circular, the corresponding portion of the boundary of $\overline{NE}^{1}(X)$ is not the closure of any locally discrete collection of rays.  By duality $\overline{NM}_{1}(X)$ is also not the closure of any locally discrete collection of rays. Since $-K_{X}$ is big, all of $\overline{NM}_{1}(X)$ is $K_{X}$-negative, so \eqref{movablecone} fails for $X$.
$\Box$
\end{exmple}

\section{$K_{X}$-negative Faces and the Minimal Model Program}

In this section we prove Theorem \ref{contraction} which relates $K_{X}$-negative extremal faces of the cone
$\overline{NE}_{1}(X)_{K_{X} + \Delta \geq 0} + \overline{NM}_{1}(X)$ to outcomes of the minimal model
program.  The first step is to prove a version of Theorem \ref{contraction} for divisors
$K_{X} + \Delta$ with $\Delta$ big.

\begin{thrm} \label{bigcontraction}
Let $(X,\Delta)$ be a $\mathbb{Q}$-factorial klt pair with $\Delta$ big.  Suppose that $D := K_{X} + \Delta$ lies
on the pseudo-effective boundary.  Then there is a birational morphism $\psi: W \to X$
and a contraction $h: W \to Z$ such that:
\begin{enumerate}
\item Every movable curve $C$ on $W$ with $\psi^{*}D \cdot C = 0$ is contracted by $h$.
\item For a general pair of points in a general fiber of $h$, there is a movable
curve $C$ through the two points with $\psi^{*}D \cdot C = 0$.
\end{enumerate}
These properties determine the pair $(W,h)$ up to birational equivalence.
In fact the map we construct satisfies a stronger property:
\begin{itemize}
\item[(3)] There is an open set $U \subset W$ such that a complete curve $C$ in $U$ is contracted by $h$ iff $\psi^{*}D \cdot C = 0$ and $[C] \in \overline{NM}_{1}(W)$.  For a general fiber $H$ of $h$ there is a completion $\overline{H}$ of $U \cap H$ so that the complement of $U \cap H$ has codimension at least $2$.  Furthermore $\psi$ is an isomorphism on $U$.
\end{itemize}
\end{thrm}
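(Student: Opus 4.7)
Because $\Delta$ is big and $(X,\Delta)$ is klt, I write $\Delta \sim_{\mathbb{R}} A + B$ with $A$ ample and $B \geq 0$, and use Lemma \ref{ampleqlinearklt} to arrange that $(X, B + A)$ is klt. Thus $D = K_X + \Delta$ has the form $K_X + B + A$ with $A$ ample, so Theorem \ref{mmptheorem} applies; since $D$ is pseudo-effective, BCHM produces a log terminal model $\phi: X \dashrightarrow X'$ with $(X', \phi_*\Delta)$ klt and $\phi_* D = K_{X'} + \phi_*\Delta$ nef. Bigness of $\phi_*\Delta$ is preserved under the MMP steps, and running the base-point-free theorem on the parallel decomposition $(K_{X'} + B') + A'$ shows $\phi_* D$ is semi-ample. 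Let $g: X' \to Z$ be the induced ample-model morphism and $H$ the ample class on $Z$ with $\phi_* D \equiv g^* H$; since $D$ lies on the pseudo-effective boundary, $\dim Z < \dim X'$.

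I next take a smooth common resolution $\psi: W \to X$, $\psi': W \to X'$ of $\phi$ and set $h := g \circ \psi'$. Applying the negativity lemma inductively to the $D$-non-positive MMP that realises $\phi$ produces an effective $\psi'$-exceptional divisor $E$ on $W$ satisfying the key identity $\psi^* D \equiv (\psi')^*\phi_* D + E = h^* H + E$.

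The three properties follow from this identity. For (1), if $C$ is a movable curve on $W$ with $\psi^* D \cdot C = 0$, then $h^* H \cdot C + E \cdot C = 0$ with both summands non-negative (since $[C] \in \overline{NM}_1(W)$, $H$ is nef, and $E \geq 0$), so each vanishes; ampleness of $H$ forces $h_* C = 0$, and hence $h$ contracts $C$. For (2), the general fiber $F'$ of $g$ satisfies $K_{F'} + \phi_*\Delta|_{F'} \equiv (K_{X'} + \phi_*\Delta)|_{F'} \equiv 0$ by adjunction, and $\phi_*\Delta|_{F'}$ is big because bigness restricts to a general fiber (decompose $\phi_*\Delta$ as ample plus effective), so $-K_{F'}$ is big and $F'$ is rationally connected by the theorem of Zhang and Hacon--McKernan on log Fano varieties. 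A general pair of points on $F'$ is therefore joined by a rational curve $C'$ moving in a family dominating $X'$; for a generic choice $C'$ avoids $\psi'(\mathrm{Supp}(E))$, and its strict transform $C$ on $W$ is a movable curve with $\psi^* D \cdot C = 0$. For (3), set $U := W \setminus \mathrm{Supp}(E)$; since $\psi'(E)$ has codimension $\geq 2$ in $X'$, it meets a general fiber of $g$ in codimension $\geq 2$, and the lift has the same property for fibers of $h$. Any complete curve $C \subset U$ automatically has $E \cdot C = 0$, so $\psi^* D \cdot C = h^* H \cdot C$; for nef $C$ this vanishes if and only if $h_* C = 0$. Uniqueness up to birational equivalence then follows because (1) and (2) together identify the generic fiber of $h$ as the $\psi^* D$-null movable equivalence class through a general point, a condition intrinsic to $(X, \Delta, D)$.

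The main obstacle is producing an \emph{ample} class $H$ on $Z$ rather than a merely nef one; this is what converts the numerical vanishing $\psi^* D \cdot C = 0$ into $h_* C = 0$ in (1) and (3), and it is precisely here that bigness of $\Delta$ is indispensable, since it supplies the ample summand needed to invoke the base-point-free theorem and thereby upgrade the Mori-fibre-space output of Theorem \ref{mmptheorem} to a genuine ample model. A secondary technical point is checking the sign $E \geq 0$ in the negativity identity, which requires a careful inductive application of the negativity lemma to each flip and divisorial contraction in the MMP realising $\phi$.
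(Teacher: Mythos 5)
Your outline follows the paper's general strategy for Properties (1) and (2), and the decomposition $\psi^*D \equiv (\psi')^*\phi_*D + E$ is exactly the identity the paper uses. But the argument for Property (3) has a genuine gap, and it is precisely the part of the theorem that required the extra work in the paper.

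Property (3) is a biconditional: for complete $C \subset U$, $h$ contracts $C$ \emph{if and only if} $\psi^*D \cdot C = 0$ \emph{and} $[C] \in \overline{NM}_1(W)$. Your argument only verifies that $\psi^*D \cdot C = h^*H \cdot C$ on $U$, which handles the numerical-vanishing part of the equivalence. You never show that a curve $C \subset U$ contracted by $h$ is automatically nef. This is nontrivial: a curve contracted by $g$ on $X'$ can easily fail to be nef (e.g.\ a curve with negative self-intersection inside a divisorial component of the relative exceptional locus of some other birational model over $Z$). The paper introduces Lemma \ref{relativelogfano} exactly to address this: using bigness of the boundary and finiteness of models over $Z$ from \cite{bchm06}, it replaces $X'$ by a further birational contraction $Y^+$ over $Z$ whose ``bad'' locus $N^+$ has codimension $\geq 2$, so that every $g^+$-contracted curve avoiding $N^+$ is nef, and then cuts $N^+$ out of $U$. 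Without this step the forward implication of the iff is simply unproved.

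A second problem is your choice $U = W \setminus \mathrm{Supp}(E)$. The divisor $E$ lives on $W$, where it has codimension $1$; even though $\psi'(\mathrm{Supp}(E))$ has codimension $\geq 2$ in $X'$, a component of $\mathrm{Supp}(E)$ may dominate $Z$, in which case $\mathrm{Supp}(E)$ meets a general fiber of $h$ in codimension $1$, not $2$. The ``lift has the same property'' claim does not hold: dimension counts on $X'$ do not transfer to the blown-up $W$. The paper avoids this by defining $U$ as the preimage of the locus on which the birational contraction $\Phi$ is an isomorphism (whose image in $Y^+$ has codimension-$2$ complement because $\Phi^{-1}$ extracts no divisors), rather than as the complement of the exceptional divisor upstairs.

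Two smaller points. You apply Theorem \ref{mmptheorem} directly to $X$, but that theorem assumes $\mathbb{Q}$-factoriality; the paper first passes to a log terminal model $(Y,\Gamma) \to (X,\Delta)$ and works there. And for Property (2) you invoke rational connectedness of the log Fano fibers (Zhang, Hacon--M\textsuperscript{c}Kernan); the paper gets the connecting movable curve more cheaply by intersecting the general fiber with very ample divisors through the two points, which works once the complement of $U$ has codimension $\geq 2$ in the fiber. Both are acceptable for (2), but the paper's route is what makes (3) available, since the same codimension-$2$ control is doing double duty.
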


The proof goes as follows.  By running the $(K_{X} + \Delta)$-minimal model program
we obtain a birational contraction $\phi: X \dashrightarrow X'$ such that
$K_{X'} + \phi_{*}\Delta$ is nef.  Applying the basepoint free theorem to $X'$
gives us the desired map on an open subset of $X$.

There is one subtlety: in order to prove the stronger Property (3), we will need to
improve the properties of the contraction $g': X' \to Z$.
This is accomplished by the following lemma.

\begin{lem}  \label{relativelogfano}
Let $(X',\Delta')$ be a $\mathbb{Q}$-factorial klt pair such that $\Delta'$ is big.
Suppose that $g': X' \to Z$ is a $(K_{X'} + \Delta')$-trivial fibration.  We can construct a
birational contraction $\phi': X' \dashrightarrow X^{+}$ over $Z$ (with morphism $g^{+}: X^{+} \to Z$)
and a Zariski-closed subset $N^{+} \subset X^{+}$ of codimension at least $2$ such that every curve
contracted by $g^{+}$ that does not lie in $N^{+}$ is nef.
\end{lem}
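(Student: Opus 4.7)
The plan is to construct $Y^+$ as the output of a relative minimal model program over $Z$, exploiting that $\Gamma'$ being big makes $(Y', \Gamma')$ a pair of Fano type, so that by \cite{bchm06} any MMP we initiate terminates and we have fine control of the resulting birational models.

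\textbf{Step 1 (Fano-type setup).} Since $\Gamma'$ is big, we write $\Gamma' \sim_{\mathbb{R}} A + E$ with $A$ ample and $E \geq 0$, and use Lemma \ref{ampleqlinearklt} to replace $\Gamma'$ by a suitable $\mathbb{R}$-linearly equivalent klt boundary. This exhibits $(Y', \Gamma')$ as a $\mathbb{Q}$-factorial klt pair of Fano type, placing us inside the hypotheses of the main finiteness results of \cite{bchm06}.

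\textbf{Step 2 (MMP over $Z$).} Fix a sufficiently ample divisor $H$ on $Y'$. For small enough $\epsilon > 0$, bigness of $\Gamma'$ lets us choose an effective divisor $\Gamma_\epsilon \sim_{\mathbb{R}} \Gamma' - \epsilon H$ with $(Y',\Gamma_\epsilon)$ klt. Since $K_{Y'}+\Gamma_\epsilon \equiv_{Z} -\epsilon H$ is relatively anti-ample, I will run the $(K_{Y'}+\Gamma_\epsilon)$-MMP with scaling by $\epsilon H$ over $Z$. By BCHM this terminates in finitely many steps, producing a birational contraction $\phi': Y' \dashrightarrow Y^+$ over $Z$ and a relative Mori fibration structure; composing with the reference map to $Z$ yields the required $g^+: Y^+ \to Z$.

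\textbf{Step 3 (nefness of fiber curves and bad locus).} On $Y^+$, the relative extremal rays remaining after the MMP are of Mori fiber type, so each is generated by a curve in a general fiber that belongs to a family covering $Y^+$. Such curves are movable and hence nef by Theorem \ref{bdppmovable}. Define $N^+ \subset Y^+$ to be the union of (a) the locus where $\phi'^{-1}$ is not a morphism, and (b) the non-equidimensional locus of $g^+$. Both are Zariski-closed and of codimension $\geq 2$ in $Y^+$: (a) because $\phi'$ is a birational contraction between $\mathbb{Q}$-factorial varieties, and (b) by generic flatness together with $\mathbb{Q}$-factoriality. A complete curve $C$ in $Y^+ \setminus N^+$ contracted by $g^+$ lies in the equidimensional, non-exceptional locus, so it is numerically proportional to a general fiber curve in the covering family, and is therefore nef.

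\textbf{Main obstacle.} The most delicate point is ensuring that the relative MMP over $Z$ indeed terminates at a fibration to $Z$ itself, rather than at a Mori fibration $Y^+ \to Z'$ factoring through some proper intermediate $Z' \to Z$, and that nef curves we produce genuinely cover all of $Y^+ \setminus N^+$. The first issue is handled by the fact that $g'$ is already a fibration to $Z$, so the MMP over $Z$ keeps the existence of a morphism to $Z$; the second requires a careful equidimensionality argument for relative Mori fibrations between $\mathbb{Q}$-factorial varieties off a codimension-$2$ locus, which then matches the $N^+$ constructed above.
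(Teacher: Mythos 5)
Your overall shape---run an MMP over $Z$ and declare the result to be $Y^{+}$---is close in spirit to the paper, but the specific MMP you run and the nefness argument you give at the end both have gaps; the paper's argument uses a different mechanism that your proof is missing.

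The central problem is in Step 3. You run a relative $(K_{Y'}+\Gamma_{\epsilon})$-MMP over $Z$ with $K_{Y'}+\Gamma_{\epsilon}$ anti-big over $Z$; this terminates in a Mori fiber space $g'':Y^{+}\to Z''$ \emph{over} $Z$, but nothing forces $Z''=Z$. If $\rho(Y'/Z)>1$ you will in general have $\rho(Z''/Z)>0$, so $\dim N_{1}(Y^{+}/Z)>1$. Your claim that a complete curve $C\subset Y^{+}\setminus N^{+}$ contracted by $g^{+}$ ``is numerically proportional to a general fiber curve in the covering family'' silently assumes $\dim N_{1}(Y^{+}/Z)=1$; without that, there are $g^{+}$-contracted curves whose classes live nowhere near the ray of the MFS fiber, and for these the movability/nefness of general MFS fiber curves says nothing. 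Concretely, any $g^{+}$-contracted curve not contracted by $g''$ is not in the MFS fiber, may lie in the preimage of a curve in $Z''$ contracted by $Z''\to Z$, and can easily be non-nef (e.g.\ a negative section of a ruled surface sitting over such a curve). Your definition of $N^{+}$ as indeterminacy locus of $\phi'$ plus the non-equidimensional locus does nothing to exclude such curves.

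The paper's proof works differently and avoids this. It does not aim to reach a Mori fiber space at all. Instead it uses that $g'$ is a log Fano fibration and invokes finiteness of birational contractions over $Z$ (\cite{bchm06}, Corollary 1.3.1) to define the bad locus $N'$ as the union of indeterminacy loci of \emph{all} such contractions; then any $g'$-contracted curve avoiding $N'$ is automatically nef, because a non-nef curve must meet the indeterminacy locus of the model on which the offending pseudoeffective divisor becomes nef. The remaining task is purely to kill the divisorial components $E_{i}$ of $N'$, and this is done one component at a time by a relative $(K_{Y'}+\Gamma'+\epsilon E)$-MMP over $Z$ (note the choice of divisor: $K_{Y'}+\Gamma'+\epsilon E\equiv_{Z}\epsilon E$ is numerically $\epsilon E$ over $Z$, and $E$ is covered by curves on which it is negative, so $E$ must be contracted), not the $(K_{Y'}+\Gamma_{\epsilon})$-MMP you chose. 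To repair your argument you would need to import the finiteness-of-models step and replace your MMP with a targeted contraction of the divisorial bad locus; as written, your proof does not establish nefness of curves away from $N^{+}$.
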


\begin{proof}
Since $\Delta'$ is big and $g'$ is $(K_{X'} + \Delta')$-trivial, $g'$ is a log Fano fibration.
Thus, $X'$ admits only finitely many birational contractions over $Z$ (see \cite{bchm06}, Corollary 1.3.1).
If we let $N'$ be the union of the (finitely many) closed subsets where these birational maps
are not isomorphisms, then any curve $C$ contracted by $g'$ and avoiding $N'$ is
nef.  We'll call $N'$ the ``bad'' locus of $g': X' \to Z$.

Let $\{ E_{i} \}$ denote the divisorial components of $N'$.  Through any point $x \in E_{i}$ there
is some curve $C$ that is not nef.  If we pick
$x$ to be very general in $E_{i}$, then $C$ deforms to cover $E_{i}$.  The only
way that $C$ could not be nef is if $E_{i} \cdot C < 0$.  Thus, every component $E_{i}$
is covered by curves with $E_{i} \cdot C < 0$.

Fix some component $E$, and choose $\epsilon > 0$ such that $(X',\Delta' + \epsilon E)$ is still klt.
Note that $K_{X'} + \Delta'+ \epsilon E$ is numerically equivalent over $Z$ to $\epsilon E$.
Thus when we run the relative $(K_{X'} + \Delta' + \epsilon E)$-minimal model program,
we must contract the component $E$.  By performing this process inductively, we eventually
find a variety $X^{+}$ such that the ``bad'' locus $N^{+}$ has codimension at least $2$.
\end{proof}

\begin{proof}[Proof of Theorem \ref{bigcontraction}:]
Since $\Delta$ is big, we can run the $(K_{X} + \Delta)$-minimal
model program with scaling to obtain a birational map $\phi: X \dashrightarrow X'$ such
that $K_{X'} + \phi_{*}\Delta$ is nef.  By applying the basepoint freeness theorem to
$K_{X'} + \phi_{*}\Delta$ we obtain a contraction morphism $g': X' \to Z$.
Applying Lemma \ref{relativelogfano} we obtain a birational contraction
$\phi': X' \dashrightarrow X^{+}$ and $g^{+}: X^{+} \to Z$.  There is a codimension 2 subset
$N^{+} \subset X^{+}$ such that every curve that is contracted by $g^{+}$ and not contained in $N^{+}$ is nef.
We denote the composition $\phi' \circ \phi: X \dashrightarrow X^{+}$ by $\Phi$.  We define
$W$ by taking a resolution of the map $\Phi$, so that we have maps $s: W \to X$ and $s^{+}: W \to X^{+}$.
We let $h = g^{+} \circ s^{+}: W \to Z$.

Note that the map $\Phi$ is $(K_{X} + \Delta)$-non-positive, since $\phi$ is $(K_{X} + \Delta)$-negative
and $\phi'$ is $(K_{X} + \Delta)$-trivial.  Thus, for some effective divisor $E$ we have
\begin{equation*}
s^{*}(K_{X} + \Delta) = s^{+*}(K_{X^{+}} + \Phi_{*}\Delta) + E
\end{equation*}
(see \cite{km98}, Lemma 3.38).
As a consequence, if $C$ is a movable curve on $W$ with $s^{*}D \cdot C = 0$, then we also have
$s^{+*}(K_{X^{+}} + \Phi_{*}\Delta) \cdot C = 0$.  In particular, $C$ is contracted by $h$, showing
Property (1).

Let $U_{X}$ be the open subset of $X$ on which $\Phi$ is an isomorphism.  Since
$\Phi^{-1}$ does not contract a divisor, the complement of
$\Phi(U_{X})$ has codimension at least $2$.
By shrinking $U_{X}$ we can remove all fibers of $g^{+}$ on which the complement of $\Phi(U_{X})$
has codimension $1$ and all the reducible fibers of $g^{+}$.
Finally, we remove the subset $\Phi^{-1}(N^{+} \cap \Phi(U_{X}))$; note that $\Phi(U_{X})$ still has
codimension $2$ in a general fiber of $g^{+}$.  We define the open subset $U \subset W$ by taking $s^{-1}(U_{X})$.

Since $s^{+}(U)$ has codimension $2$ in a general fiber of $g^{+}$, we can connect two general points
in a general fiber of $g^{+}$ by a movable curve $C^{+} \subset s^{+}(U)$: we just intersect the
fiber with general very ample divisors that contain the two points.  Since
$\Phi$ is an isomorphism on $U$, the movable curve $\Phi^{-1}C^{+}$ satisfies
$\psi^{*}D \cdot \Phi^{-1}C^{+} = (K_{X+} + \Phi_{*}\Delta) \cdot C^{+} = 0$. This shows Property (2).

In fact, suppose that $C \subset U$ is a complete curve contracted by $h$.  By construction $s^{+}(C)$
is nef, so by Lemma \ref{conepushforward} $C$ is also nef.
Furthermore, since $s$ and $s^{+}$ are isomorphisms on a neighborhood of $C$, we must have
$\psi^{*}D \cdot C = s^{+*}(K_{X^{+}} + \Phi_{*}\Delta) \cdot C = 0$.
Conversely, any curve $C$ on $W$ with $s^{+*}(K_{X+} + \Phi_{*}\Delta) \cdot C = 0$
must be contracted by $h$, showing the first condition of Property (3).  To see the second, suppose that $H$ is a general fiber of $h$ and let $\overline{H}$ denote the closure of $s^{+}(U \cap H)$ in $X^{+}$.  The complement of $U \cap H$ in $\overline{H}$ has codimension at least $2$.

Finally, we must show the uniqueness of $h: W \to Z$ up to birational equivalence.  So, suppose that
$h': W' \to Z'$ is another map satisfying Properties (1) and (2).  Let $\widetilde{W}$ be a common resolution.
The maps $\widetilde{W} \to Z$ and $\widetilde{W} \to Z'$ still satisfy Properties (1) and (2).  Since a contraction morphism is determined up to birational equivalence by the movable curves it contracts, $h$ and $h'$
coincide on an open subset.
\end{proof}

When we rephrase Theorem \ref{bigcontraction} in terms of faces of
$\overline{NE}_{1}(X)_{K_{X} + \Delta \geq 0} + \overline{NM}_{1}(X)$, we obtain
Theorem \ref{contraction}.  The main difficulty is to show that most $(K_{X} + \Delta)$-negative
extremal faces of the cone admit a divisor $D$ supporting the cone precisely along that face.

\begin{proof}[Proof of Theorem \ref{contraction}:]
We first reduce to the case when $(X,\Delta)$ is a $\mathbb{Q}$-factorial klt pair.
Fix divisors $A$ and $\Delta_{1}$ on $X$ and choose $c$ and $D$ as in Lemma \ref{dltpairs}.  For appropriate choices we may ensure that $F$ is  $(\Delta + \tau D - \tau \Delta_{1})$-negative and that $\beta^{\perp}$ avoids the cone $\overline{NE}_{1}(X)_{K_{X} + \Delta + \tau D - \tau \Delta_{1} \geq 0}$.  Thus we may assume that $(X,\Delta)$ is klt.  Let $\pi: (Y,\Gamma) \to (X,\Delta)$ be a log terminal model.  By intersecting $\overline{NM}_{1}(Y)$ with $\pi_{*}^{-1}(F)$ we obtain a face $F_{Y}$.  Note that $\pi^{*}\beta^{\perp}$ avoids $\overline{NE}_{1}(Y)_{K_{Y} + \Gamma \geq 0}$ and contains $F_{Y}$.  Suppose we knew the statement of Theorem \ref{contraction} for $(Y,\Gamma)$, $F_{Y}$, and $\pi^{*}\beta$.  Properties (1) and (2) for $(X,\Delta)$ follow immediately.  Since $\pi$ is a small contraction and $\psi: W \to Y$ is an isomorphism on $U$, we can remove the preimage of the $\pi$-exceptional locus from $U$ without affecting Property (3).  Thus we may assume that $(X,\Delta)$ is $\mathbb{Q}$-factorial.

Our next goal is to find a pseudo-effective divisor $D$ such that $D^{\perp}$ supports
$\overline{NE}_{1}(X)_{K_{X} + \Delta \geq 0} + \overline{NM}_{1}(X)$ exactly along $F$.
This will follow from our assumption that there is some pseudo-effective divisor class $\beta$ such that
$\beta^{\perp}$ contains $F$ but doesn't intersect $\overline{NE}_{1}(X)_{K_{X} + \Delta \geq 0}$.

We let $F_{\beta}$ denote the face
\begin{equation*}
F_{\beta} := \beta^{\perp} \cap \left( \overline{NE}_{1}(X)_{K_{X} + \Delta \geq 0} +
\overline{NM}_{1}(X) \right).
\end{equation*}
Note that $F$ is a subface of $F_{\beta}$.  For a sufficiently small closed convex cone $V$ containing
$\overline{NE}_{1}(X)_{K_{X} + \Delta = 0} - \{ 0 \}$ in its interior, $\beta$ is still positive on
\begin{equation*}
\left(\overline{NE}_{1}(X)_{K_{X} + \Delta \geq 0} + V \right) - \{ 0 \}.
\end{equation*}
This means that $F_{\beta}$ is also a $(K_{X} + \Delta)$-negative face of the larger cone
$\overline{NE}_{1}(X)_{K_{X} + \Delta \geq 0} + V + \overline{NM}_{1}(X)$.
Theorem \ref{nefcone} implies that there are only finitely many coextremal rays in a neighborhood
of $F_{\beta}$.  Thus for any subface of $F_{\beta}$ there is an $\mathbb{R}$-Cartier divisor $D$
such that $D^{\perp}$ supports $\overline{NE}_{1}(X)_{K_{X} + \Delta \geq 0} + \overline{NM}_{1}(X)$
exactly along that subface.  In particular this is true for the subface $F$.  Furthermore, $D$
must be pseudo-effective since it is non-negative on $\overline{NM}_{1}(X)$.  This finishes the construction
of $D$.

Since $D$ is positive on $\overline{NE}_{1}(X)_{K_{X} + \Delta = 0}$,
Lemma \ref{boundingform} shows that (after rescaling) $D = K_{X} + \Delta + A$ for some ample $A$.
Thus, we can apply Theorem \ref{bigcontraction} to $D$ to obtain $\psi: W \to X$ and $h: W \to Z$.  Properties
(1) and (2) and the uniqueness up to birational equivalence follow immediately.  To show Property (3),
we just need to note that since $\psi$ is an isomorphism on $U$,
for any irreducible curve $C$ on $U$ we have
$[\psi_{*}C] \in F$ iff $\psi^{*}D \cdot C = 0$ and $[C] \in \overline{NM}_{1}(W)$.
\end{proof}

\section{Accumulation of Rays} \label{accumulationsection}

In this section we consider whether coextremal rays accumulate only along
$\overline{NE}_{1}(X)_{K_{X} + \Delta = 0}$ as in Conjecture \ref{batyrevconjecture}.
This stronger statement is proven for terminal threefolds in \cite{araujo08}.  Araujo first finds
movable curves generating coextremal rays
by running the minimal model program.  Using boundedness of terminal threefolds of Picard number $1$, she
bounds the degrees of all of these curves with respect to a fixed polarization.  So in fact, for any
ample divisor $A$ there are only finitely many $(K_{X} + \Delta + A)$-negative coextremal rays.

The situation in higher dimensions is expected to be similar.  The following conjecture
is due to Alexeev (\cite{alexeev94}, 11.3), and A.~Borisov and L.~Borisov (\cite{borisov96}, Conjecture 1.1).

\begin{conj}[Borisov-Alexeev-Borisov]
For any $\epsilon > 0$, the family of $\mathbb{Q}$-Fano varieties of a given dimension
with log discrepancy greater than $\epsilon$ is bounded.
\end{conj}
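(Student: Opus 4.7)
The plan is to prove boundedness by establishing two-sided bounds on the anti-canonical volume $(-K_X)^n$ for $\mathbb{Q}$-Fano varieties $X$ of dimension $n$ with log discrepancies greater than $\epsilon$, and then invoking effective birationality to place all such varieties in a bounded family. Once one has constants $c_1(\epsilon,n), c_2(\epsilon,n) > 0$ with $c_1 \leq (-K_X)^n \leq c_2$, a combination of Matsusaka-type effective very-ampleness results with Angehrn--Siu style base-point freeness should produce a uniform $m = m(\epsilon,n)$ so that $-mK_X$ is very ample and embeds $X$ into a projective space of bounded dimension. Hilbert scheme arguments then yield the bounded family.

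First I would attack the lower bound on $(-K_X)^n$, which is the more tractable half. A Noetherian degeneration argument, combined with vanishing theorems and the control of multiplier ideals coming from the log discrepancy hypothesis, should produce a uniform positive lower bound: if volumes collapsed to zero along a family, the limit would violate the $\epsilon$ condition on singularities.

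Next I would pursue the upper bound, which is the essence of the conjecture, via Shokurov's theory of complements. The goal would be to produce, for each such $X$, an $N$-complement $\Delta$ with $N = N(\epsilon,n)$ bounded: an effective divisor $\Delta$ so that $(X, \Delta)$ is log canonical and $N(K_X + \Delta) \sim 0$. This reduces the problem to boundedness for log Calabi-Yau pairs of bounded index, tractable in principle by induction on dimension via adjunction to dlt centers. An alternative, following Borisov-Borisov for the toric case, would aim at a convex-geometric inequality linking $(-K_X)^n$ directly to the log discrepancies of $X$.

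The principal obstacle is the boundedness of complements, or any surrogate for the upper volume bound. Even setting up a workable inductive scheme requires a lifting/termination property along log canonical centers for which no proof is available within the framework of this paper. This is the step at which the conjecture genuinely resists all currently available techniques: the Borisov-Alexeev-Borisov conjecture is a central open problem in higher-dimensional birational geometry, and a complete proof would constitute a major theorem well beyond the scope of the methods developed here. Accordingly the statement is recorded as a conjecture rather than a theorem, with the consequences for Batyrev's strengthening of Theorem \ref{firsttheorem} being the main point relevant to this paper.
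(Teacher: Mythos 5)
The paper offers no proof of this statement; it is recorded purely as a conjecture, cited to Alexeev and to A.~Borisov and L.~Borisov, and used only as a hypothesis that (combined with Araujo's argument) would imply the stronger form of Batyrev's conjecture on accumulation of coextremal rays. Your proposal reaches exactly the right conclusion on that point: you correctly refuse to claim a proof and flag the statement as out of reach for the methods of the paper. Your sketch of what a proof would require --- two-sided volume bounds, effective very-ampleness to get a bounded embedding, and in particular boundedness of complements as the crux of the upper bound, set up inductively via adjunction to log canonical centers --- is in fact an accurate outline of the strategy that was later carried out by Birkar (Ann.\ of Math.\ 190 (2019) and 193 (2021)), well after this paper was written. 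So there is nothing to compare against in the paper itself: the only thing the paper ``does'' with this conjecture is assume it, and you have matched that by not proving it.

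Two small cautions if you were to push the sketch further. First, the lower volume bound is not obtained by a ``Noetherian degeneration'' argument as you suggest; it comes out of the theory of complements and effective birationality itself, not as an independent preliminary step, and the interplay is more delicate than the sketch indicates. Second, the $\epsilon$-log canonical hypothesis enters not merely as a condition preserved in limits but as the mechanism that prevents volumes from going to infinity (via bounding multiplicities of boundary components of complements); your phrasing suggests it is mainly used for the lower bound, which inverts the actual role it plays. Neither of these affects your correct bottom line that the statement cannot be established within the present paper.
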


Since running the minimal model program can only increase log discrepancies, this
conjecture combined with Araujo's argument should settle the question.

We consider Conjecture \ref{batyrevconjecture} from a different perspective.  Our goal
is to show that when $\Delta$ is big, Conjecture \ref{batyrevconjecture} can be derived from the termination of flips conjecture.

\begin{conj}[Termination of Flips] \label{terminationofflips}
Let $(X,\Delta)$ be a $\mathbb{Q}$-factorial klt pair.  Then there is no infinite sequence
of $(K_{X} + \Delta)$ flips.
\end{conj}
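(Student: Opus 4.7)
The final statement is the Termination of Flips conjecture, which is famously open in general; any honest proposal must acknowledge that the full result is out of reach by currently available methods. The plan I would pursue is Shokurov's difficulty-function strategy, noting upfront that its completion rests on further open conjectures, principally ACC for minimal log discrepancies. The organizing idea is to construct a monovariant on $\mathbb{Q}$-factorial klt pairs that takes values in a well-ordered set, is nonincreasing under flips, and strictly decreases infinitely often along any putative infinite sequence of flips.

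Concretely, I would work with the Shokurov difficulty
\[
d(X,\Delta) := \#\bigl\{\, E \text{ exceptional over } X \,\bigm|\, a(E;X,\Delta) < 1 \,\bigr\}.
\]
Finiteness of $d$ for klt pairs is standard: on a log resolution only finitely many exceptional divisors appear, and divisors extracted by further blowups of smooth centers on that resolution have log discrepancy at least $1$. Monotonicity under a flip $\phi: X \dashrightarrow X^{+}$ follows from the Negativity Lemma applied to a common resolution: $a(E;X^{+},\phi_{*}\Delta) \geq a(E;X,\Delta)$ for every divisor $E$ over $X$, with strict inequality exactly when the center of $E$ meets the flipping locus. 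Consequently $d$ is nonincreasing under flips, and strictly decreases whenever some log discrepancy crosses the threshold $1$ from below, so the well-ordering of $\mathbb{Z}_{\geq 0}$ closes the argument whenever this happens infinitely often in a proposed infinite sequence.

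The hard part, and the genuine obstacle, is controlling the infinite tail of flips that do not strictly decrease $d$. The established route is a double reduction: first, reduce general termination to \emph{special termination}, meaning termination of flips along the lc centers of the boundary, following arguments of Shokurov and Birkar; second, inductively reduce special termination in dimension $n$ to termination in dimension $n-1$ together with the ACC and lower semicontinuity conjectures for minimal log discrepancies. I would therefore organize the proof as (i) perform the reduction to special termination plus ACC/LSC for MLDs; (ii) attack ACC for MLDs via conjectural boundedness of $\epsilon$-log-canonical Fano varieties, i.e.\ the Borisov--Alexeev--Borisov conjecture cited earlier in the paper. Neither ingredient is known in full generality, and this is precisely where the conjecture resists proof: the monovariant framework is unconditional, but closing it demands inputs on singularities of klt pairs that remain beyond reach.
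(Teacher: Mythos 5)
The paper offers no proof of this statement. It is labeled a \emph{conjecture} (the Termination of Flips conjecture, Conjecture~\ref{terminationofflips}) and is used in the remainder of Section~6 only as a hypothesis: Theorem~\ref{bigcase} explicitly begins ``Assume termination of flips,'' and Lemma~\ref{finitelymanymodels} likewise takes it as an input. So there is no ``paper's own proof'' to compare against; a proof attempt here is not expected, and indeed no correct unconditional proof is currently known.

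You are therefore right to flag the conjectural status at the outset rather than to pretend to a proof. Your sketch of the Shokurov difficulty-function strategy is an accurate summary of the known partial approach: the difficulty $d(X,\Delta)$ is finite for klt pairs, it is nonincreasing under flips by the Negativity Lemma (log discrepancies cannot decrease), and it strictly drops only when some log discrepancy crosses the chosen threshold, so the well-ordering of $\mathbb{Z}_{\geq 0}$ alone does not finish the argument. The genuine gap you name --- controlling the infinite tail of difficulty-preserving flips, with the standard route passing through special termination and then ACC together with lower semicontinuity for minimal log discrepancies --- is precisely where the problem remains open; those inputs are not theorems, and the Borisov--Alexeev--Borisov conjecture which the paper cites in a related context is likewise unproven. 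In short, your proposal is an honest and essentially correct account of the state of the art, and it agrees with the paper's treatment of the statement as an assumption rather than a result.
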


\begin{thrm} \label{bigcase}
Let $(X,\Delta)$ be a $\mathbb{Q}$-factorial klt pair with $\Delta$ big.  Assume that Conjecture \ref{terminationofflips} holds.  Then there are finitely many $(K_{X}+\Delta)$-negative movable curves $C_{i}$ such that
\begin{equation*}
\overline{NE}_{1}(X)_{K_{X} + \Delta \geq 0} + \overline{NM}_{1}(X) =
\overline{NE}_{1}(X)_{K_{X} + \Delta \geq 0} + \sum \mathbb{R}_{\geq 0} [C_{i}].
\end{equation*}
\end{thrm}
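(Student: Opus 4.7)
The plan is to reduce to the $\mathbb{Q}$-factorial klt case with $\Delta$ big, and then to establish a $V$-free version of Lemma \ref{boundingform}. This will let me apply Corollary \ref{compactmmpuse} directly to the full cone of bounding divisors, eliminating the auxiliary cone $V$ appearing in Theorem \ref{nefcone}.

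First I would carry out the reductions from the proof of Theorem \ref{nefcone}. Lemma \ref{dltpairs} lets us replace $(X,\Delta)$ by a nearby klt pair (bigness is an open condition in $N^{1}(X)$, hence preserved under small perturbation), and then Corollary~1.4.3 of \cite{bchm06} produces a $\mathbb{Q}$-factorial log terminal model $(Y,\Gamma)$ with a small birational contraction $\pi: Y \to X$ for which $\pi_{*}\Gamma = \Delta$, so $\Gamma$ remains big. As in Section 4, Lemma \ref{conepushforward} transports a finite collection of movable curves for $(Y,\Gamma)$ back to $(X,\Delta)$.

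The key technical step is then to show that, under bigness of $\Delta$ and termination of flips, every bounding divisor $D$ of $\overline{NE}_{1}(X)_{K_{X}+\Delta\geq 0} + \overline{NM}_{1}(X)$ can be written as $\delta(K_{X}+\Delta) + A$ for some $\delta > 0$ and some ample $A$. Granted this strengthened form of Lemma \ref{boundingform}, a compact slice of the full cone of bounding divisors satisfies the hypotheses of Corollary \ref{compactmmpuse}, producing finitely many movable curves $\{C_{i}\}$; the supporting hyperplane argument of Corollary \ref{qfactorialnefcone} then yields the desired equality of cones. To prove the strengthened form, I would follow the contradiction argument of Lemma \ref{boundingform}: if $\sigma = \mathbb{R}_{\geq 0}[D] + \mathbb{R}_{\geq 0}[-K_{X}-\Delta]$ misses the interior of the ample cone, a separating hyperplane provides a class $\alpha \in \overline{NE}_{1}(X)_{K_{X}+\Delta \geq 0}$ with $D\cdot\alpha = 0$. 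Using bigness of $\Delta$ to rewrite $D + \epsilon H$ in the form $K_{X} + \Delta' + H'$ for a klt pair $(X,\Delta')$ and an ample $H'$, and then invoking termination of flips to run the corresponding MMP to a Mori fibration, I would force $\alpha$ into $\overline{NE}_{1}(X)_{K_{X}+\Delta = 0}$, returning to the setting of Lemma \ref{boundingform} and completing the contradiction.

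The main obstacle is precisely this last step, since for a general bounding divisor $D$ the MMP is not directly covered by \cite{bchm06}: the results there require the scaling divisor to be ample, whereas here $D$ is only pseudo-effective and lies on the boundary of the relevant cone. Bigness of $\Delta$ supplies the ample divisor needed to initiate a relevant MMP, while termination of flips is what guarantees the MMP reaches a Mori fibration whose general fibers are the only candidates for the limit class $\alpha$. Once the strengthened lemma is in place, the rest of the argument parallels the proofs in Section 4 exactly.
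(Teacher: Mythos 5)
Your proposal hinges on a strengthened version of Lemma \ref{boundingform}: that every bounding divisor $D$ (not just every $V$-bounding divisor) can be written as $\delta(K_{X}+\Delta)+A$ with $\delta>0$ and $A$ ample, once $\Delta$ is big and termination of flips is assumed. This is exactly the claim that fails, and it is the reason the paper introduces the auxiliary cone $V$ in the first place. The contradiction argument you sketch does not close: the separating-hyperplane class $\alpha$ one obtains lies in $\overline{NE}_{1}(X)_{K_{X}+\Delta\geq 0}$ with $\sigma\subset\alpha_{\leq 0}$, and for a general bounding divisor all we know is $D\cdot\alpha\geq 0$. When $D$ is nef (which is a genuine case for bounding divisors -- the nef portion of the pseudo-effective boundary can support a coextremal ray), we can perfectly well have $D\cdot\alpha=0$ with $\alpha\in\overline{NE}_{1}(X)_{K_{X}+\Delta=0}$, and there is no contradiction. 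Lemma \ref{boundingform} escapes this only because a $V$-bounding divisor is forced to be strictly positive on the interior of $V$, hence on $\overline{NE}_{1}(X)_{K_{X}+\Delta=0}-\{0\}$. Neither bigness of $\Delta$ nor termination of flips repairs this purely cone-theoretic gap; your appeal to running an MMP to ``force $\alpha$ into $\overline{NE}_{1}(X)_{K_{X}+\Delta=0}$'' does not produce the needed ample decomposition of $D$.

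The paper's actual route is different and worth noting. It proves only the weaker Lemma \ref{boundingform2}, giving $D=\delta_{D}(K_{X}+\Delta)+N_{D}$ with $N_{D}$ nef and $\delta_{D}\geq 0$, and then splits into two cases. When $\delta_{D}>0$, it writes $\Delta=A+B$ with $A$ ample, sets $\Gamma=\tfrac{1}{2}A'+B$, and absorbs $\tfrac{1}{2}A+N_{D}$ into an ample divisor, so the \cite{bchm06} MMP machinery applies. When $\delta_{D}=0$, so $D$ is nef, it runs an MMP that contracts $D$-trivial extremal rays; termination of flips is exactly what guarantees this terminates, and Lemma \ref{vanishingintersectionpushforward} shows the pushed-forward $D$ still meets a coextremal ray, yielding a $D$-trivial Mori fibration. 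Finiteness is then supplied by Lemma \ref{finitelymanymodels}, a K\"onig's-lemma argument that needs both bigness of $\Delta$ (to get finitely many negative extremal rays at each step) and termination of flips (to forbid infinite branches). Your proposal omits the nef case, omits the $D$-trivial MMP, omits Lemma \ref{vanishingintersectionpushforward}, and omits the model-counting Lemma \ref{finitelymanymodels}; these are the ingredients that make the theorem go through, and they cannot be replaced by Corollary \ref{compactmmpuse} alone.
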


\begin{rmk}
Note that (in contrast to the Cone Theorem) Conjecture \ref{batyrevconjecture} does not directly follow from the special case when $\Delta$ is big.  The problem is the presence of the term
$\overline{NE}_{1}(X)_{K_{X} + \Delta \geq 0}$.  If we add a small ample divisor $\epsilon H$
to $\Delta$, we are actually changing the shape of this cone, so that the limiting behavior
as $\epsilon$ vanishes is more subtle.
\end{rmk}

The proof of Theorem \ref{bigcase} uses the same arguments as before.  Given a
bounding divisor $D$, we find a birational contraction $\phi: X \dashrightarrow X'$ and a Mori fibration
$g: X' \to Z$ that is $\phi_{*}D$-trivial.  We then use termination of flips to prove that
we need only finitely many fibrations.

Our first lemma is an analogue of Lemma \ref{boundingform} for an arbitrary bounding divisor.

\begin{lem} \label{boundingform2}
Let $(X,\Delta)$ be a klt pair such that $K_{X} + \Delta$ is not pseudo-effective.
Every bounding divisor $D$ can be written
\begin{equation*}
D = \delta_{D} (K_{X} + \Delta) + N_{D}
\end{equation*}
for some nef $N_{D}$ and some $\delta_{D} \geq 0$.
\end{lem}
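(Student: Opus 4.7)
The plan is to produce $\delta_D \geq 0$ directly from the bounding property of $D$, by a ratio and convex-combination argument that is in the same spirit as Lemma \ref{boundingform} but simpler, since we only require $N_D$ to be nef (not ample) and allow $\delta_D = 0$.

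If $D$ itself is already nef, set $\delta_D = 0$ and $N_D = D$. Otherwise, define
\begin{align*}
s^{*} &= \sup\left\{\tfrac{D \cdot C}{(K_{X}+\Delta) \cdot C} : C \in \overline{NE}_{1}(X),\ (K_{X}+\Delta) \cdot C < 0\right\}, \\
i^{*} &= \inf\left\{\tfrac{D \cdot C}{(K_{X}+\Delta) \cdot C} : C \in \overline{NE}_{1}(X),\ (K_{X}+\Delta) \cdot C > 0\right\},
\end{align*}
with the conventions $\sup\emptyset = -\infty$, $\inf\emptyset = +\infty$. These are precisely the two-sided constraints on $\delta$ that must hold for $D - \delta(K_{X}+\Delta)$ to have non-negative intersection with every $C \in \overline{NE}_{1}(X)$. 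A compactness argument on a slice of $\overline{NE}_{1}(X)$, together with the fact that $D$ is non-negative on the slab $\overline{NE}_{1}(X)_{K_{X}+\Delta = 0}$, shows that both quantities are finite whenever the corresponding defining set is non-empty.

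The crux is to show $\max(s^{*}, 0) \leq i^{*}$. Non-negativity of $i^{*}$ is immediate: any $C \in \overline{NE}_{1}(X)$ with $(K_{X}+\Delta) \cdot C > 0$ lies in $\overline{NE}_{1}(X)_{K_{X}+\Delta \geq 0}$, and the bounding condition forces $D \cdot C \geq 0$. For the inequality $s^{*} \leq i^{*}$, I would argue pointwise: given $C_{1}, C_{2} \in \overline{NE}_{1}(X)$ with $(K_{X}+\Delta) \cdot C_{1} > 0 > (K_{X}+\Delta) \cdot C_{2}$, the unique convex combination $C_{t} = t C_{1} + (1-t) C_{2}$ annihilated by $K_{X}+\Delta$ lies in $\overline{NE}_{1}(X)_{K_{X}+\Delta = 0} \subset \overline{NE}_{1}(X)_{K_{X}+\Delta \geq 0}$, so the bounding condition yields $D \cdot C_{t} \geq 0$. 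Expanding this inner product and rearranging gives $\frac{D \cdot C_{2}}{(K_{X}+\Delta) \cdot C_{2}} \leq \frac{D \cdot C_{1}}{(K_{X}+\Delta) \cdot C_{1}}$, and taking the sup over $C_{2}$ on the left and the inf over $C_{1}$ on the right produces $s^{*} \leq i^{*}$.

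With any $\delta_D \in [\max(s^{*}, 0), i^{*}]$ chosen, define $N_D = D - \delta_D (K_{X}+\Delta)$. Verifying that $N_D$ is nef is routine: for each $C \in \overline{NE}_{1}(X)$, split according to the sign of $(K_{X}+\Delta) \cdot C$. The positive case uses $\delta_D \leq i^{*}$, the negative case uses $\delta_D \geq s^{*}$ (after flipping the inequality upon dividing by a negative number), and the zero case uses the bounding condition directly. The main obstacle is the inequality $s^{*} \leq i^{*}$; it is the only step that genuinely invokes the bounding property, and every other step is bookkeeping.
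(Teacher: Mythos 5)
Your proof is correct in its essentials and takes a genuinely different route from the paper's. The paper argues by contradiction via Hahn-Banach: assuming no such decomposition exists, it separates the cone $\mathbb{R}_{\geq 0}[D] + \mathbb{R}_{\geq 0}[-K_{X}-\Delta]$ from the nef cone by a curve class $\alpha$, observes $\alpha \in \overline{NE}_{1}(X)_{K_{X}+\Delta \geq 0}$ by Kleiman's criterion, and derives $D \cdot \alpha < 0$, contradicting the bounding condition. Your argument is the constructive dual of this: you identify the exact interval $[\max(s^{*},0), i^{*}]$ of admissible $\delta$ and show it is nonempty. The convex-combination step proving $s^{*} \leq i^{*}$ plays the same role as the separating $\alpha$ in the paper --- in fact $C_{t}$ is exactly the class that the separation argument would produce, found explicitly rather than abstractly. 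Your version has the pedagogical advantage of actually exhibiting where $\delta_D$ lives, and the inequality $i^{*} \geq 0$ makes transparent why $\delta_{D} \geq 0$ can be arranged, which is slightly implicit in the paper.

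One point deserves a sentence more than you give it: the finiteness of $s^{*}$. When $i^{*} < \infty$, the inequality $s^{*} \leq i^{*}$ already gives finiteness, so the compactness argument you gesture at is really only needed when the defining set for $i^{*}$ is empty, i.e.\ when $-(K_{X}+\Delta)$ is nef. In that degenerate regime the limit points on a compact slice land in the indeterminate case $D \cdot C_{\infty} = 0 = (K_{X}+\Delta) \cdot C_{\infty}$, and nonnegativity of $D$ on the slab alone does not obviously bound the ratio from above; one must use the bounding condition on the full cone $\overline{NE}_{1}(X)_{K_{X}+\Delta \geq 0} + \overline{NM}_{1}(X)$ (your argument only invokes the restriction to $\overline{NE}_{1}(X)_{K_{X}+\Delta \geq 0}$). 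The paper's proof, which works with the two-dimensional cone rather than the affine ray, is open to a parallel objection in the same degenerate case, so this is a shared subtlety rather than a defect peculiar to your argument, but it is worth acknowledging explicitly rather than dismissing as ``a compactness argument.''
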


\begin{proof}
Just as before, we suppose the lemma fails and derive a contradiction.
That is, suppose there is some bounding divisor $D$ such that the cone
\begin{equation*}
\sigma = \mathbb{R}_{\geq 0} [D] + \mathbb{R}_{\geq 0} [-K_{X} - \Delta]
\end{equation*}
never intersects the nef cone.  Then there is a curve class $\alpha$ for which
the cone $\sigma$ is contained in $\alpha_{< 0}$, but the nef cone is contained in $\alpha_{> 0}$.
By Kleiman's criterion $\alpha$ is in the closed cone of effective curves;
in particular $\alpha \in \overline{NE}_{1}(X)_{K_{X} + \Delta \geq 0}$.
Because $D$ is a bounding divisor, we should have $D \cdot \alpha \geq 0$.  This contradicts
$\sigma \subset \alpha_{< 0}$.
\end{proof}

Given a bounding divisor $D$, we want to find curves that have vanishing intersection with $D$ by
running the minimal model program.  During the process, we must ensure that $D$ is still a bounding
divisor, or at least that it still intersects some coextremal ray.  This issue is handled by the next lemma.

\begin{lem} \label{vanishingintersectionpushforward}
Let $(X,\Delta)$ be a $\mathbb{Q}$-factorial klt pair.  Suppose $D$ is a bounding divisor
and $\phi: X \dashrightarrow X'$ is a composition of $(K_{X} + \Delta)$ flips
and divisorial contractions that are $D$-non-positive.  Then there is a
class $\alpha$ on the boundary of $\overline{NM}_{1}(X')$ such that $\phi_{*}D \cdot \alpha = 0$
and $(K_{X'} + \phi_{*}\Delta) \cdot \alpha < 0$.
\end{lem}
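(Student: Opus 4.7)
The plan is to transport a coextremal ray for $D$ on $X$ directly across $\phi$ via a common log resolution. By the definition of bounding divisor, $D^{\perp}$ contains some coextremal ray, so I fix $\gamma \in \overline{NM}_{1}(X)$ with $D \cdot \gamma = 0$ and $(K_{X} + \Delta) \cdot \gamma < 0$. Choose a common resolution $p: W \to X$, $q: W \to X'$ of $\phi$, and by Lemma \ref{conepushforward} lift $\gamma$ to some $\tilde \gamma \in \overline{NM}_{1}(W)$ with $p_{*} \tilde \gamma = \gamma$. The candidate class on $X'$ is then $\alpha := q_{*} \tilde \gamma \in \overline{NM}_{1}(X')$.

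The key inputs are two instances of the standard negativity lemma (\cite{km98}, Lemma 3.38, iterated along the MMP): since each step of $\phi$ is non-positive for both $K_{X} + \Delta$ and $D$, one obtains
\[
p^{*}D = q^{*}\phi_{*}D + E, \qquad p^{*}(K_{X} + \Delta) = q^{*}(K_{X'} + \phi_{*}\Delta) + E',
\]
with $E, E' \geq 0$ and both $q$-exceptional. Because $q_{*}E = 0$, one then has $\phi_{*}D = q_{*}p^{*}D$, and since pullback by $p$ and pushforward by $q$ both preserve pseudo-effectivity, $\phi_{*}D$ is pseudo-effective on $X'$. The projection formula combined with $D \cdot \gamma = 0$ yields
\[
\phi_{*}D \cdot \alpha = q^{*}\phi_{*}D \cdot \tilde \gamma = D \cdot \gamma - E \cdot \tilde \gamma = -E \cdot \tilde \gamma.
\]
The left side is $\geq 0$ (a pseudo-effective divisor paired with a nef curve class), while the right side is $\leq 0$ (an effective divisor paired with a class in $\overline{NM}_{1}(W)$), so both vanish and $\phi_{*}D \cdot \alpha = 0$. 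The analogous computation gives
\[
(K_{X'} + \phi_{*}\Delta) \cdot \alpha = (K_{X} + \Delta) \cdot \gamma - E' \cdot \tilde \gamma \leq (K_{X} + \Delta) \cdot \gamma < 0,
\]
which in particular forces $\alpha \neq 0$. Assuming $\phi_{*}D$ is not numerically trivial, the hyperplane $\phi_{*}D^{\perp}$ supports $\overline{NM}_{1}(X')$ and contains $\alpha$, placing $\alpha$ on the boundary.

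The hard part is really packaged entirely into the negativity lemma: it is exactly the $D$-non-positivity hypothesis that forces the discrepancy $E = p^{*}D - q^{*}\phi_{*}D$ to be effective and $q$-exceptional, and this in turn creates the sign squeeze that collapses $\phi_{*}D \cdot \alpha$ to zero. The only edge case I would need to handle separately is when $\phi_{*}D$ vanishes as a numerical class on $X'$, in which case boundary membership of $\alpha$ is not immediate from $\phi_{*}D^{\perp}$; there I would use that $(K_{X'} + \phi_{*}\Delta) \cdot \alpha < 0$ forces $K_{X'} + \phi_{*}\Delta$ itself to be non-pseudo-effective on $X'$, so other pseudo-effective divisors exist whose perpendicular hyperplanes support $\overline{NM}_{1}(X')$ and can be arranged to contain $\alpha$.
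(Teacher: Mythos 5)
Your proof is correct and follows essentially the same route as the paper: resolve $\phi$ by a common model, lift a $D$-trivial coextremal ray to a nef curve class on the resolution via Lemma \ref{conepushforward}, push it down to $X'$, and use the negativity lemma twice (once for $K_X+\Delta$ and once for $D$) together with pseudo-effectivity of $\phi_*D$ to get the sign squeeze. The only addition beyond what the paper writes is your explicit treatment of the edge case where $\phi_*D$ could be numerically trivial when justifying that $\alpha$ lies on the boundary of $\overline{NM}_1(X')$ --- the paper leaves that step implicit --- but this does not constitute a different approach.
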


\begin{proof}
Let $Y$ be a smooth variety resolving the rational map $\phi: X \dashrightarrow X'$.  We denote
the corresponding maps by $g: Y \to X$ and $g': Y \to X'$.
Choose a class $\beta$ on a $D$-trivial coextremal ray.  By Lemma \ref{conepushforward}, there
is some class $\gamma \in \overline{NM}_{1}(Y)$ with $g_{*}\gamma = \beta$.  Define $\alpha = g'_{*}\gamma$.

By \cite{km98}, Lemma 3.38, there is an effective divisor $E$ such that
\begin{equation*}
g^{*}(K_{X} + \Delta) = g'^{*}(K_{X'} + \phi_{*}\Delta) + E.
\end{equation*}
Since $\gamma \cdot E \geq 0$, we have
\begin{align*}
\left( K_{X'} + \phi_{*}\Delta \right) \cdot \alpha &
= g'^{*} \left( K_{X'} + \phi_{*}\Delta \right) \cdot \gamma \\
& \leq g^{*}(K_{X} + \Delta) \cdot \gamma \\
& \leq (K_{X} + \Delta) \cdot \beta \\
& < 0
\end{align*}
Similarly, since each step of $\phi$ is $D$-non-positive, there is some effective divisor $E'$ such that
\begin{equation*}
g^{*}D = g'^{*}\phi_{*}D + E'.
\end{equation*}
The same argument shows that $\phi_{*}D \cdot \alpha \leq 0$.  However, since $\phi_{*}D$ is pseudo-effective, we must have $\phi_{*}D \cdot \alpha = 0$.
\end{proof}

The next lemma is used to show the finiteness of coextremal rays.  The bigness of $\Delta$ plays a key role.

\begin{lem} \label{finitelymanymodels}
Let $(X,\Delta)$ be a $\mathbb{Q}$-factorial klt pair with $\Delta$ big.  Assume Conjecture \ref{terminationofflips}.  Then there are only finitely many models that can be obtained by a sequence of $(K_{X}+\Delta)$ flips and
divisorial contractions.
\end{lem}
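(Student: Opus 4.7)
The plan is to reduce the statement to the finiteness of ample models already imported from \cite{bchm06} as Theorem \ref{mmptheorem}, by identifying every intermediate model with a log terminal model of a nearby perturbation of $K_X + \Delta$.

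First, since $\Delta$ is big, Kodaira's lemma combined with Lemma \ref{ampleqlinearklt} lets us write $\Delta \sim_{\mathbb{R}} A + \Delta'$ with $A$ ample, $\Delta' \geq 0$, and $(X, A + \Delta')$ still klt. Fix in addition an ample Cartier divisor $H$ such that $(X, \Delta + H)$ is klt, and let $V$ denote the finite-dimensional space of real Weil divisors spanned by the irreducible components of $\Delta'$, $A$, and $H$.

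The key claim is that any intermediate model $\phi_i : X \dashrightarrow X_i$ obtained from a sequence of $(K_X + \Delta)$-flips and divisorial contractions is a log terminal model of $(X, \Delta + t_i H)$ for some $t_i \geq 0$. Granting this, Theorem \ref{mmptheorem} applied to the family of divisors $K_X + \Delta + t H$ inside $V$ (with ample part $A$) supplies only finitely many log terminal models as $t$ varies, giving the desired conclusion. To produce such a $t_i$, run the $(K_{X_i} + \phi_{i*}\Delta)$-MMP with scaling of $\phi_{i*}H$ starting from $X_i$; by Conjecture \ref{terminationofflips} (termination of flips) this MMP terminates. Define $t_i \geq 0$ to be the smallest value for which $K_{X_i} + \phi_{i*}(\Delta + t_i H)$ is nef; it is well defined because for $t$ large this divisor is ample.

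The main obstacle is the compatibility between an arbitrary $(K_X + \Delta)$-MMP sequence and an MMP with scaling of $H$: we need each of the original $(K_X + \Delta)$-negative steps in $\phi_i$ also to be $(K_X + \Delta + t_i H)$-negative in order for $\phi_i$ to qualify as a log terminal model of the perturbed pair. This is handled by observing that each contracted extremal ray $R_j$ along $\phi_i$ has a well-defined positive $H$-degree, so it remains $(K_X + \Delta + tH)$-negative for all $t$ in an interval determined by the $H$-degrees of the previously contracted rays. By choosing the scaling parameters carefully step by step, the composition $\phi_i$ is realized as the initial segment of an MMP with scaling of $H$ applied to $(X, \Delta)$; since all the relevant perturbations $\Delta + tH$ lie in $V$, Theorem \ref{mmptheorem} then produces the required finite bound, completing the reduction.
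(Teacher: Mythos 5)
Your reduction has a genuine gap at the key claim: that every intermediate model $X_{i}$ arising from an arbitrary $(K_{X}+\Delta)$-MMP sequence is a log terminal model of $(X,\Delta + t_{i}H)$ for some $t_{i} \geq 0$.  For $X_{i}$ to be such a model you need two things simultaneously: (a) $K_{X_{i}} + \phi_{i*}(\Delta + t_{i}H)$ is nef, and (b) each step $\phi_{j}: X_{j-1} \dashrightarrow X_{j}$ of the sequence is $(K_{X}+\Delta+t_{i}H)$-negative, i.e.\ $(K_{X_{j-1}} + \phi_{(j-1)*}(\Delta + t_{i}H)) \cdot R_{j-1} < 0$.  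These pull $t_{i}$ in opposite directions.  Condition (a) forces $t_{i}$ to be at least the nef threshold on $X_{i}$; condition (b) caps $t_{i}$ below $-\,(K_{X_{j-1}}+\Delta_{j-1}) \cdot R_{j-1}\,/\,(H_{j-1} \cdot R_{j-1})$ for each $j$, since the rays $R_{j-1}$ have positive $H$-degree as you observe.  Nothing prevents the nef threshold on $X_{i}$ from exceeding one of these caps: this happens precisely when the sequence contracts rays in a different order than the $H$-scaling would dictate.  An MMP with scaling is characterized by always contracting a ray on which the current scaling parameter is achieved, and the parameter is monotone non-increasing; an arbitrary $(K_{X}+\Delta)$-MMP is under no such constraint, so it is not in general realizable as an initial segment of an MMP with scaling for any fixed $H$, nor for any $H' \sim_{\mathbb{R}} H$.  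The sentence ``by choosing the scaling parameters carefully step by step'' is exactly the point that cannot be made to work, because the scaling parameters are determined by the geometry, not chosen freely.

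A secondary issue: even granting the claim, Theorem~\ref{mmptheorem} as quoted in the paper gives finiteness only of Mori fiber space outputs $g_{i}: X_{i} \to Z_{i}$, not of all intermediate log terminal models; you would need the stronger finiteness-of-models statement from BCHM (Corollary~1.1.5 there, which the paper imports only in the weaker Mori fiber space form).  The paper's own proof avoids both difficulties entirely by a more elementary combinatorial argument: build the tree of all possible $(K_{X}+\Delta)$-MMP sequences rooted at $X$.  Since $\Delta$ (and hence its pushforward) stays big, the Cone Theorem gives finitely many $(K_{X'}+\phi_{*}\Delta)$-negative extremal rays on each model, so the tree is locally finite; termination of flips forbids an infinite branch; and K\"onig's Lemma then forces the tree to be finite.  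This sidesteps any need to identify intermediate models with minimal models of perturbed pairs.
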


\begin{proof}
By the Cone Theorem, if $\Delta$ is big then there are only finitely many $(K_{X} + \Delta)$-negative
extremal rays.

We construct a tree in the following way.  The bottom node represents the variety $X$.
For each $(K_{X} + \Delta)$ flip or divisorial contraction $\phi: X \dashrightarrow X'$, we add a node
representing $X'$, and connect it to the node for $X$ by an edge representing $\phi$.  After we have
completed the (finitely many) additions of edges around the node $X$, we perform the same process on
each model $X'$ and adjoint divisor $K_{X'} + \phi_{*}\Delta$.  We continue until we have exhausted all
possible models $X'$ that can be obtained by a sequence of flips or divisorial contractions.

Note that for any birational map $\phi$ the divisor $\phi_{*}\Delta$ is still big.
This implies that every model $X'$ only has finitely many $(K_{X'} + \phi_{*}\Delta)$-negative
extremal rays.  In other words, our tree is locally finite at each node.  By K\"onig's Lemma,
if there were infinitely many nodes, there would have to be an infinite branch of the tree,
contradicting Conjecture \ref{terminationofflips}.
\end{proof}

\begin{proof}[Proof of Theorem \ref{bigcase}:]
Since $\Delta$ is big, $\Delta \equiv A + B$ for some ample divisor $A$ and some
effective divisor $B$.  For sufficiently small $\tau$ the pair $(X,(1-\tau)\Delta + \tau B)$ is klt.
We define $\Gamma := (1-\tau)\Delta + \tau B$.

Our next goal is to show that for any bounding divisor $D$, there is a $(K_{X} + \Gamma)$-minimal
model program $\phi: X \dashrightarrow X'$ and a Mori fibration $g: X' \to Z$ such that $\phi_{*}D$
is trivial on the fibers.  By Lemma \ref{boundingform2}, we can write $D = \delta_{D} (K_{X} + \Delta) + N_{D}$
for some nef $N_{D}$ and some $\delta_{D} \geq 0$.  We separate into two cases.

First suppose that $\delta_{D} > 0$.  By rescaling $D$ we can write
\begin{equation*}
D \equiv K_{X} + \Gamma + \left( \tau A + N_{D} \right).
\end{equation*}
Since $\tau A + N_{D}$ is ample, we may replace it by a suitable $\mathbb{R}$-linearly equivalent
divisor $A'$ so that $(X,\Gamma + A')$ is klt.  We run the minimal model program with scaling to
obtain a map $\phi: X \dashrightarrow X'$ and a Mori fibration $g: X' \to Z$ such that
$\phi_{*}D$ is trivial on the fibers.

Now suppose that $\delta_{D} = 0$, so that $D = N_{D}$ is nef.  Suppose that $D$ has
vanishing intersection with an extremal ray that corresponds to a flip or divisorial contraction.
If $\psi: X \dashrightarrow X_{1}$ is this operation, then $\psi_{*}D$ is still nef.  We can repeat
this process inductively; by termination of flips, there is some birational contraction $\phi: X \dashrightarrow X'$
such that $\phi_{*}D$ does not have vanishing intersection with any extremal ray corresponding
to a flip or divisorial contraction.  Since $D$ is a bounding divisor, Lemma
\ref{vanishingintersectionpushforward} shows that $\phi_{*}D$ still has vanishing intersection
with some $(K_{X'} + \phi_{*}\Delta)$-negative class $\alpha \in \overline{NM}_{1}(X')$.
Since $\phi_{*}D$ is nef, this means that it must also have vanishing intersection with
a $(K_{X} + \Delta)$-negative extremal ray.  The contraction of this extremal ray is a
$\phi_{*}D$-trivial Mori fibration $g: X' \to Z$.

To each bounding divisor $D$ we have associated a rational map $\phi: X \dashrightarrow X'$
found by running the $(K_{X} + \Gamma)$-minimal model program.  Lemma \ref{finitelymanymodels} shows
there can only be finitely many such models $X'$.  Furthermore, since $\phi_{*}\Gamma$ is big for
any of these maps, there can be only finitely many Mori fibrations on
each $X'$.  By choosing a sufficiently general curve $C$ in a fiber of each Mori fibration, we find
a finite set of movable curves $\{ C_{i} \}$ such that for any bounding divisor $D$, there is some
$C_{i}$ for which $D \cdot C_{i} = 0$.  A straightforward cone argument finishes the proof.
\end{proof}

\nocite{*}
\bibliographystyle{amsalpha}
\bibliography{nefcone}

\providecommand{\bysame}{\leavevmode\hbox to3em{\hrulefill}\thinspace}
\providecommand{\MR}{\relax\ifhmode\unskip\space\fi MR }
\providecommand{\MRhref}[2]{%
  \href{http://www.ams.org/mathscinet-getitem?mr=#1}{#2}
}
\providecommand{\href}[2]{#2}
\begin{thebibliography}{BCHM10}

\bibitem[Ale94]{alexeev94}
V.~Alexeev, \emph{Boundedness and {$K^{2}$} for log surfaces}, Internat. J.
  Math \textbf{5} (1994), no.~6, 779--810.

\bibitem[Ara05]{araujo05}
C.~Araujo, \emph{The cone of effective divisors of log varieties after
  {B}atyrev}, 2005, arXiv:math/0502174v1.

\bibitem[Ara10]{araujo08}
\bysame, \emph{The cone of pseudo-effective divisors of log varieties after
  {B}atyrev}, Math. Zeit. \textbf{264} (2010), no.~1, 179--193.

\bibitem[Bar07]{barkowski07}
S.~Barkowski, \emph{The cone of moving curves of a smooth {F}ano-threefold},
  2007, arXiv:math/0703025.

\bibitem[Bat92]{batyrev89}
V.~Batyrev, \emph{The cone of effective divisors of threefolds}, Proc. Int.
  Conference on Algebra, Part 3 (Novosibirsk, 1989), Cont. Math, vol. 131,
  Amer. Math. Soc., 1992, pp.~337--352.

\bibitem[BCHM10]{bchm06}
C.~Birkar, P.~Cascini, C.~Hacon, and J.~M{\textsuperscript{c}}Kernan,
  \emph{Existence of minimal models for varieties of log general type}, J.
  Amer. Math. Soc. (2010).

\bibitem[BDPP04]{bdpp04}
S.~Boucksom, J.P. Demailly, M.~P{\v a}un, and T.~Peternell, \emph{The
  pseudo-effective cone of a compact {K\"a}hler manifold and varieties of
  negative {K}odaira dimension}, 2004, arXiv:math/0405285v1, submitted to J.
  Alg. Geometry.

\bibitem[BKS04]{bks04}
T.~Bauer, A.~K{\"{u}}ronya, and T.~Szemberg, \emph{Zariski chambers, volumes,
  and stable base loci}, J. Reine Agnew. Math. \textbf{576} (2004), 209--233.

\bibitem[Bor96]{borisov96}
A.~Borisov, \emph{Boundedness theorem for {F}ano log-threefolds}, J. Alg.
  Geometry \textbf{5} (1996), no.~1, 119--133.

\bibitem[Bou04]{boucksom04}
S.~Boucksom, \emph{Divisorial zariski decompositions on compact complex
  manifolds}, Ann. Sci. {\'E}cole Norm. Sup. \textbf{37} (2004), no.~4, 45--76.

\bibitem[Cut86]{cutkosky86}
S.~Cutkosky, \emph{Zariski decomposition of divisors on algebraic varieties},
  Duke Math. J. \textbf{53} (1986), no.~1, 149--156.

\bibitem[Kaw84]{kawamata84}
Y.~Kawamata, \emph{The cone of curves of algebraic varieties}, Ann. of Math.
  \textbf{119} (1984), no.~3, 603--633.

\bibitem[KM98]{km98}
J.~Koll{\'a}r and S.~Mori, \emph{Birational geometry of algebraic varieties},
  Cambridge tracts in mathematics, vol. 134, Cambridge University Press,
  Cambridge UK, 1998.

\bibitem[Kol84]{kollar84}
J.~Koll{\'a}r, \emph{The cone theorem. {N}ote to a paper: ``{T}he cone of
  curves of algebraic varieties''}, Ann. of Math. \textbf{120} (1984), no.~1,
  1--5.

\bibitem[Laz04]{lazarsfeld04}
R.~Lazarsfeld, \emph{Positivity in algebraic geometry {I-II}}, Ergebnisse der
  Mathematik und ihrer Grenzgebiete. 3. Folge, vol. 48-49, Springer-Verlag,
  Berlin Heidelberg, 2004.

\bibitem[Nak04]{nakayama04}
N.~Nakayama, \emph{Zariski-decomposition and abundance}, MSJ Memoirs, vol.~14,
  Mathematical Society of Japan, Tokyo, 2004.

\bibitem[Siu06]{siu06}
Y.T. Siu, \emph{A general non-vanishing theorem and an analytic proof of the
  finite generation of the canonical ring}, 2006, arXiv:math/0610740v1.

\bibitem[Xie05]{xie05}
Q.~Xie, \emph{The nef curve cone revisited}, 2005, arXiv:math/0501193.

\end{thebibliography}

\end{document}